\newtheorem{theorem}{Theorem}[section]
\newtheorem{corollary}[theorem]{Corollary}
\newtheorem{lemma}[theorem]{Lemma}
\newtheorem{conjecture}[theorem]{Conjecture}
\newtheorem{observation}[theorem]{Observation}
\theoremstyle{remark}
\newcommand{\scc}{\text{\upshape{scc}}}
\title{Minimum $T$-Joins and Signed-Circuit Covering }
\author{Yezhou Wu\thanks{Key Laboratory of Ocean Observation-Imaging Testbed of Zhejiang Province, Zhejiang University, Zhoushan, Zhejiang 316021, P.R. China; Email: yezhouwu@zju.edu.cn. Partially supported by a grant of National Natural Science Foundation of China (No. 11501504) and a grant of Natural Science Foundation of Zhejiang Province (No. LY16A010005).}\; and
Dong Ye\thanks{Corresponding author. Department of Mathematical Sciences \& Center for Computational Sciences, Middle Tennessee State University, Murfreesboro, TN 37132. Email: dong.ye@mtsu.edu. Partially supported by a grant from the Simons Foundation (No. 396516).}}
\begin{document}

\maketitle

\begin{abstract}
Let $G$ be a graph and $T$ be a vertex subset of $G$ with even cardinality. A $T$-join of $G$ is a subset $J$ of edges such that a vertex of $G$ is incident with an odd number of edges in $J$ if and only if the vertex belongs to $T$. Minimum $T$-joins have many applications in combinatorial optimizations. In this paper, we show that a minimum $T$-join of a connected graph $G$ has at most $|E(G)|-\frac 1 2 |E(\widehat{\, G\,})|$ edges where $\widehat{\,G\,}$ is the maximum bidegeless subgraph of $G$.  Further, we are able to use this result to show that every flow-admissible signed graph $(G,\sigma)$ has a signed-circuit cover with length at most $\frac{19} 6 |E(G)|$. Particularly, a 2-edge-connected signed graph $(G,\sigma)$ with even negativeness has a signed-circuit cover with length at most $\frac 8 3 |E(G)|$.  
\medskip

\noindent{\em Keywords:}  $T$-joins, circuit covers, signed graphs,

\end{abstract}

\section{Introduction}

In this paper, a graph may have multiple edges and loops. A loop is also treated as an edge. We follow the notation from the book \cite{CQ12}. A {\em cycle} is a connected 2-regular graph. An {\em even graph} is a graph in which every vertex has even degree. An {\em Eulerian graph} is a connected even graph. In a graph $G$, a {\em circuit} is the same as a cycle, which is minimal dependent set of the graphic matroid defined on $G$. A {\em circuit cover} $\mathcal C$ of a graph is a family of circuits which cover all edges of $G$. The {\em length} of a circuit cover is defined as $\ell(\mathcal C)=\sum_{C\in \mathcal C} |E(C)|$. It is a classic optimization problem initiated by Itai et. al.  \cite{ILPR} to investigate circuit covers of graphs with shortest length. Thomassen \cite{CT} show that it is NP-complete to determine whether a bridgeless graph has a circuit cover with length at most $k$ for a given integer $k$. A well-known conjecture, the Shortest Cycle Cover Conjecture, in this area was made by Alon and Tarsi \cite{AT} as follows.  

\begin{conjecture}[Alon and Tarsi \cite{AT}]\label{conj:SCDC}
Every bridgeless graph $G$ has a circuit cover with length at most $ \frac 7 5 |E(G)|$.
\end{conjecture}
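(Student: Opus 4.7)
The plan is to approach Conjecture \ref{conj:SCDC} via the $T$-join framework developed in this paper, combined with classical flow theory. First, I would reduce to the 2-edge-connected case: bridges lie on no circuit, so one handles each 2-edge-connected block of $G$ separately. For a 2-edge-connected $G$, every vertex has degree at least $2$, so the maximum bidegreeless subgraph $\widehat{G}$ coincides with $G$, and the main $T$-join inequality of this paper yields $|J|\le \tfrac{1}{2}|E(G)|$ for any minimum $T$-join $J$.

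The classical translation from circuit covers to $T$-joins is the next tool. For a chosen even vertex set $T$ arising as the parity boundary of an auxiliary edge set, a $T$-join $J$ has even-subgraph complement $E(G)\setminus J$, which decomposes into circuits. Choosing three $T_i$-joins $J_1,J_2,J_3$ so that $J_1\cap J_2\cap J_3=\emptyset$ (so each edge sits in at least two of the complements) gives a circuit cover of length $3|E(G)|-|J_1|-|J_2|-|J_3|$, and the paper's bound makes each term concrete. To push this toward $\tfrac{7}{5}|E(G)|$ I would split on the flow index of $G$: when $G$ admits a nowhere-zero $4$-flow, an associated cubic reduction and edge $3$-colouring already yield $\tfrac{4}{3}|E(G)|$; otherwise one invokes a nowhere-zero $5$- or $6$-flow and picks the $J_i$ compatibly with the flow support, so that long cycles of the flow contribute efficiently to the complements.

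The main obstacle, and the reason Conjecture \ref{conj:SCDC} remains open, is the snark case. The Petersen graph saturates the $\tfrac{7}{5}$ bound exactly, so any argument must be simultaneously sharp on every snark. Combining three minimum $T$-joins under the bound $\tfrac{1}{2}|E(G)|$ yields only $\tfrac{3}{2}|E(G)|$, which is weaker than the target $\tfrac{7}{5}$; closing the gap seems to require either a refined $T$-join inequality that exploits snark structure beyond the bidegreeless subgraph $\widehat{G}$, or a compensating shortest-circuit packing argument that claws back the loss in the three-$T$-join step. This last ingredient is precisely what the current machinery does not yet supply, which is why the same ideas in this paper successfully give $\tfrac{19}{6}|E(G)|$ and $\tfrac{8}{3}|E(G)|$ for signed-circuit covers rather than the ordinary Alon--Tarsi bound.
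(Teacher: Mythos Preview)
The statement you are addressing is a \emph{conjecture}, and the paper does not prove it; it is quoted as the Alon--Tarsi Shortest Cycle Cover Conjecture and remains open. There is therefore no ``paper's own proof'' to compare against, and your proposal is not expected to be a proof either.

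That said, your write-up is not a proof and you yourself say so: you correctly observe that combining three minimum $T$-joins via the bound $|J_i|\le \tfrac12|E(G)|$ can at best give length $\tfrac32|E(G)|$, which is exactly the Bermond--Jackson--Jaeger / Alon--Tarsi $\tfrac53$ line of argument and does not reach $\tfrac75$. The ``split on the flow index'' step is where the argument genuinely breaks down: for graphs admitting a nowhere-zero $4$-flow you get $\tfrac43|E(G)|$, but for the remaining case (snarks) invoking a $5$- or $6$-flow and ``picking the $J_i$ compatibly with the flow support'' is not a method that anyone knows how to make yield $\tfrac75$; this is precisely the open part of the conjecture, and you have not supplied a mechanism here beyond a hope. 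Two minor slips: the conjecture already assumes $G$ is bridgeless, so the reduction step is only about passing to connected components; and $\widehat{G}$ is the maximal \emph{bridgeless} (not ``bidegreeless'') subgraph.
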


If the Shortest Cycle Cover Conjecture is true, then the bound given in the conjecture will be optimal as the shortest cycle cover of Petersen graph attains the bound. It is known that there connections of the Shortest Cycle Cover Conjecture to other problems in graph theory, such as, the Jeager's Petersen Flow Conjecture \cite{FJ} and the Circuit Double Cover Conjecture (cf. \cite{JT}) due to Seymour \cite{Se79} and Szekeres \cite{GS}. The best bound toward the Shortest Cycle Cover Conjecture is due to Bermond, Jackson and Jaeger~\cite{BJJ}, and independently due to Alon and Tarsi \cite{AT} as follows, which was further generalized by Fan~\cite{Fan90} to weighted bridgeless graphs. 

\begin{theorem}[Bermond, Jackson and Jaeger \cite{BJJ}, Alon and Tarsi \cite{AT}]\label{thm:5/3}
Every bridgeless graph $G$ has a circuit cover with length at most $ \frac 5 3 |E(G)|$.
\end{theorem}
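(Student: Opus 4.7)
The plan is to invoke Jaeger's $8$-flow theorem, which guarantees that every bridgeless graph admits a nowhere-zero $\mathbb{Z}_2^3$-flow. Equivalently, $E(G)$ is the union of three even subgraphs $E_1,E_2,E_3$, and each $E_i$ decomposes edge-disjointly into circuits. For each $i\in\{1,2,3\}$ with $\{j,k\}=\{1,2,3\}\setminus\{i\}$, the multiset of circuits in $E_j$ together with those in $E_k$ already covers every edge of $G$ except those in $S_i:=E_i\setminus(E_j\cup E_k)$. A full circuit cover $\mathcal{C}_i$ is obtained by supplementing $E_j$ and $E_k$ with those circuits, in a fixed circuit decomposition of $E_i$, that meet at least one edge of $S_i$.

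Writing $a_\ell$ for the number of edges of $G$ lying in exactly $\ell$ of the three even subgraphs, one has the identities $|E(G)|=a_1+a_2+a_3$ and $|E_1|+|E_2|+|E_3|=a_1+2a_2+3a_3$. The heart of the argument is an accounting that bounds $\sum_i \ell(\mathcal{C}_i)$ from above in terms of these quantities and then produces, by averaging over $i$, a single $\mathcal{C}_i$ of length at most $\tfrac{5}{3}|E(G)|$. In the extremal regime where $a_3$ is large --- so that $G$ is near-Eulerian --- the averaged construction is replaced by a direct cover obtained from a circuit decomposition of whichever $E_i$ is dominant, together with a short completion accounting for the remaining parity discrepancy; this fallback ensures the $\tfrac{5}{3}$ bound even when the averaging is loose.

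The principal obstacle lies in the supplementation step. The naive estimate on the extra length needed to cover $S_i$ is $|E_i|$, yielding only $\ell(\mathcal{C}_i)\le|E_1|+|E_2|+|E_3|$, which can approach $3|E(G)|$ and is therefore too weak by nearly a factor of two. Sharpening this requires a delicate choice of circuit decomposition of each $E_i$: circuits of the decomposition that lie entirely in $E_j\cup E_k$ can be discarded from $\mathcal{C}_i$ at no cost to coverage, and one must account simultaneously for all three $E_i$'s at edges of multiplicities $a_1,a_2,a_3$. Balancing these savings against the cost of the supplementing circuits is precisely where the factor $\tfrac{5}{3}$ emerges, and is the main combinatorial content of the theorem.
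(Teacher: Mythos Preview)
The paper does not prove this theorem; it is quoted from \cite{BJJ} and \cite{AT} and used as a black box. So there is no ``paper's proof'' to compare against, and the question is whether your sketch stands on its own.

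It does not. You correctly invoke Jaeger's $8$-flow theorem to obtain even subgraphs $E_1,E_2,E_3$ with $E_1\cup E_2\cup E_3=E(G)$, and you correctly identify that for each $i$ the pair $(E_j,E_k)$ misses only $S_i=E_i\setminus(E_j\cup E_k)$. But the entire content of the theorem lies in the step you explicitly defer: you write that ``sharpening this requires a delicate choice of circuit decomposition'' and that ``balancing these savings \ldots\ is precisely where the factor $\tfrac{5}{3}$ emerges, and is the main combinatorial content of the theorem.'' That sentence names the difficulty; it does not resolve it. The fallback you mention for large $a_3$ is equally unspecified.

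Moreover, the mechanism you propose for the sharpening is the wrong one. Choosing a circuit decomposition of $E_i$ cleverly cannot, in general, cover $S_i$ cheaply: if $E_i$ happens to be a single long cycle meeting $S_i$, every decomposition of $E_i$ consists of that one cycle, and the supplementation costs $|E_i|$ no matter what. The actual Bermond--Jackson--Jaeger / Alon--Tarsi argument does not proceed by selecting circuit decompositions at all. It enlarges the family of even subgraphs by taking symmetric differences (e.g.\ $E_i\triangle E_j$, $E_1\triangle E_2\triangle E_3$), observes that several of these also contain $S_i$, and averages over the resulting covers to extract the $\tfrac{5}{3}$ bound. Without introducing these additional even subgraphs, the averaging you set up cannot beat $|E_1|+|E_2|+|E_3|$, which can be as large as $3|E(G)|$.
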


The circuit cover problem is studied for matroids (cf. \cite{FG, PDS}). Recently, a great deal of attention has been paid to the shortest circuit cover problem of signed graphs (or signed-graphical matroids) (cf. \cite{CF,CLLZ,  KLMR,  mrrs, WY}).

 A {\em signed graph} is a graph $G$ associated with a mapping $\sigma: E(G)\to \{-1,+1\}$. A cycle of a signed graph $(G,\sigma)$ is {\em positive} if it contains an even number of negative edges, and {\em negative} otherwise. 
A {\em short barbell} is a union of two negative cycles $C_1$ and $C_2$ such that $C_1\cap C_2$ is a single vertex, and a {\em long barbell} consists of two disjoint negative cycles $C_1$ and $C_2$ joined by a minimal path $P$ (which does not contain a subpath joining $C_1$ and $C_2$). A {\em barbell} of a signed graph $(G,\sigma)$ could be a short barbell or a long barbell.
A circuit or {\em signed-circuit} of a signed graph $(G,\sigma)$ is a positive cycle or a barbell, which is a minimal dependent set in the signed graphic matroid associated with $(G,\sigma)$. A signed-circuit admits a nowhere-zero 3-flow \cite{Bo}. A signed graph with a signed-circuit cover must admits a nowhere-zero integer flow. (For definition of nowhere-zero flow, readers may refer to  \cite{Bo, CQ12}.) In fact, a signed graph has a signed-circuit cover if and only if it admits a nowhere-zero flow, so-called, flow-admissible \cite{Bo}.

It has been evident in \cite{WY} that a 3-connected signed-graph may not have a signed-circuit double cover, i.e., every edge is covered twice. This fact leads the current authors to ask, whether every flow-admissible signed graph $(G,\sigma)$ has a signed-circuit cover with length at most $2|E(G)|$? 
%For a flow-admissible signed graph $(G,\sigma)$, the length of a shortest signed-circuit cover is denoted by $\scc(G,\sigma)$. 
The first bound on the shortest circuit cover of signed graphs was obtained by  M\'a\v{c}ajov\'a et. al. \cite{mrrs} as follows.

\begin{theorem}[M\'a\v{c}ajov\'a, Raspaud, Rollov\'a and \v{S}koviera, \cite{mrrs}]
Every flow-admissible signed graph $(G,\sigma)$ has a signed-circuit cover with length at most $11|E(G)|$.
\end{theorem}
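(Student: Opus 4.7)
The plan is to convert a short circuit cover of the underlying graph $G$ into a signed-circuit cover by promoting each negative cycle to a barbell via a short connecting path. Switching the signature at vertices preserves the family of signed-circuits, so I would first switch so as to make $|E^{-}|$ as small as possible, and use the constraints imposed by flow-admissibility to reduce, after peeling off or splitting at edge-cuts of size one, to the case in which $G$ is $2$-edge-connected.

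With $G$ 2-edge-connected, I would apply Theorem~\ref{thm:5/3} to the underlying graph to obtain a circuit cover $\mathcal{C}$ of total length at most $\tfrac{5}{3}|E(G)|$. Each circuit in $\mathcal{C}$ is then classified according to the parity of its negative edges: positive circuits are already signed-circuits and I keep them as-is, while the remaining subfamily $\mathcal{C}^{-}$ of negative circuits must be modified.

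To handle $\mathcal{C}^{-}$, I would fix a spanning tree $T$ of $G$, attach a representative vertex to each circuit in $\mathcal{C}^{-}$, and pair the representatives using the standard nested matching on $T$ so that each tree edge lies on at most one pairing-path. For each pair $(C,C')$ with tree-path $P$ joining the two representatives, the subgraph $C \cup P \cup C'$ is a (long or short) barbell, hence a signed-circuit, and it covers every edge of $E(C) \cup E(P) \cup E(C')$. Replacing each pair of negative circuits by the corresponding barbell adds at most $|E(T)| \le |E(G)|$ to the running total; one can iterate or cover any residual positive edges by re-invoking Theorem~\ref{thm:5/3} on the remainder.

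The main obstacle is a cluster of technical issues that force the generous constant $11$ rather than the clean $\tfrac{5}{3}+1$ that the naive count suggests: (i) the parity of $|\mathcal{C}^{-}|$---if this number is odd, one negative circuit is left unpaired and must be absorbed into a neighbouring barbell by adjoining a third negative cycle, which may double some path-length; (ii) the internal-disjointness required of the connecting path from the two end cycles, so that the union is a \emph{minimal} signed-dependent set, possibly forcing local re-routing; and (iii) the interaction of switching with the existence of small negative cycles on which to anchor barbells. Carefully bookkeeping these $O(|E(G)|)$ overheads, and re-covering any edges lost in the pruning of internal intersections, yields the claimed signed-circuit cover of length at most $11|E(G)|$.
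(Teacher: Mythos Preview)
The paper does not prove this theorem; it is quoted from \cite{mrrs} as prior work to set context for the authors' own improved bounds, so there is no proof in the paper to compare your proposal against.

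Evaluating your proposal on its own: the overall shape---cover the underlying graph by ordinary circuits, then promote negative cycles to barbells by attaching paths---is reasonable and is indeed roughly how the argument in \cite{mrrs} proceeds, but what you have written is a plan with acknowledged gaps, not a proof. Two points are genuinely missing. First, the reduction to the $2$-edge-connected case is not as innocuous as you suggest: a flow-admissible signed graph can have cut-edges (the condition is only that no cut-edge separates off a balanced component), and a cut-edge lies in no cycle of $G$, so Theorem~\ref{thm:5/3} simply does not apply to the underlying graph. Such an edge can only be covered by a long barbell whose path crosses it, and your ``peeling off or splitting at edge-cuts of size one'' does not explain how the pieces are later reassembled into signed-circuits covering those edges. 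Second, the three ``technical issues'' you list in your final paragraph are precisely where the work lies, and you resolve none of them: when $P$ meets $C$ or $C'$ internally, or when $C$ and $C'$ share vertices, $C\cup P\cup C'$ is not a barbell and must be broken down further, possibly reintroducing unpaired negative cycles. Asserting that ``carefully bookkeeping these $O(|E(G)|)$ overheads \ldots\ yields'' the bound $11$ is not a derivation of any specific constant; as written, your sketch does not establish $11|E(G)|$ or indeed any explicit multiple of $|E(G)|$.
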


The above result has been significently improved by Cheng et. al. \cite{CLLZ}  as follows.

\begin{theorem}[Cheng, Lu, Luo and Zhang \cite{CLLZ}]
Every flow-admissible signed graph $(G,\sigma)$ has a signed-circuit cover with length at most  $\frac{14} 3 |E(G)|$.
\end{theorem}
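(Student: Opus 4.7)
The plan is to combine a flow-based even-subgraph decomposition with $T$-join techniques in the spirit of Theorem~\ref{thm:5/3}. First, I would reduce to the $2$-edge-connected case: any bridge must lie on the joining path of every barbell that contains it, so a block decomposition lets one peel off bridges and apply induction to the bridgeless sides. For a $2$-edge-connected flow-admissible signed graph, I would invoke a Bouchet-style nowhere-zero integer flow theorem (or one of its known relaxations) and use the supports of the flow's coordinate projections to produce two edge subsets $H_1,H_2\subseteq E(G)$ with $H_1\cup H_2=E(G)$, where each $H_i$ is a disjoint union of signed-circuits.

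Given such a pair, the $H_i$'s together already yield a signed-circuit cover of length $|H_1|+|H_2|=|E(G)|+|H_1\cap H_2|$, so the task reduces to minimizing the overlap. I would set this up as a $T$-join problem on an auxiliary graph obtained from $(G,\sigma)$ by contracting each negative cycle to a distinguished vertex and letting $T$ be the resulting set of such vertices; the minimum $T$-join of this auxiliary graph corresponds to the cheapest way to pair up the negative cycles across $H_1$ and $H_2$ into barbells, which is precisely what the overlap $H_1\cap H_2$ encodes. A classical bound on minimum $T$-joins (of the kind refined later in this paper) then caps $|H_1\cap H_2|$ by a fixed fraction of $|E(G)|$, and an optimization over the choice of flow yields the cover length $\tfrac{14}{3}|E(G)|$.

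The main obstacle, which distinguishes this argument from the ordinary Bermond--Jackson--Jaeger setting, is that a signed-even subgraph need not decompose into edge-disjoint signed-circuits: pairs of odd components each containing exactly one negative cycle must be joined by a path whose edges are traversed twice inside the resulting barbell. This doubling inflates the cover by an amount proportional to the total length of all joining paths, and controlling this quantity by a minimum-$T$-join estimate is where the constant is pushed from the classical $\tfrac{5}{3}$ up to $\tfrac{14}{3}$. I expect the most delicate step to be the simultaneous optimization of the overlap and of the pairing of negative cycles into barbells, since a naive pairing can produce very long joining paths even when the overlap itself is small.
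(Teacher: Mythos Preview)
This theorem is quoted in the paper as a prior result of Cheng, Lu, Luo and Zhang \cite{CLLZ}; the present paper gives no proof of it at all. It is listed in the introduction purely as background, alongside the earlier $11|E(G)|$ bound of M\'a\v{c}ajov\'a et al.\ and the later $\tfrac{25}{6}|E(G)|$ bound of Chen and Fan, before the authors state their own improvement to $\tfrac{19}{6}|E(G)|$. Consequently there is nothing in the paper to compare your proposal against.

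As for the sketch itself, it is heuristic rather than a proof. The key assertion---that a Bouchet-type integer flow on a $2$-edge-connected signed graph yields two subsets $H_1,H_2$ with $H_1\cup H_2=E(G)$ each of which is a \emph{disjoint union of signed-circuits}---is not something that follows from the known flow theorems; the support of a coordinate of a signed flow is a signed-even subgraph, but as you yourself note, a signed-even subgraph need not decompose into edge-disjoint signed-circuits. You then propose to repair this by a $T$-join argument on an auxiliary contraction, but you never quantify the $T$-join bound you are invoking, nor show how the two costs (the overlap $|H_1\cap H_2|$ and the total length of the barbell joins) combine to give exactly $\tfrac{14}{3}$. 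Without those computations the constant is unmotivated. If your goal is to reconstruct the Cheng--Lu--Luo--Zhang argument, you should consult \cite{CLLZ} directly; if your goal is simply to establish \emph{some} linear bound, the present paper's own machinery (Theorem~\ref{thm:T-join}, Lemma~\ref{lem:S-cover}, Theorem~\ref{thm:cover-bridge}) already yields the stronger $\tfrac{19}{6}|E(G)|$.
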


The bound of Cheng et. al. has been further improved recently by Chen and Fan \cite{CF}. 

\begin{theorem}[Chen and Fan \cite{CF}]
Every flow-admissible signed graph $(G,\sigma)$ has a signed-circuit cover with length at most $\frac{25} 6 |E(G)|$.
\end{theorem}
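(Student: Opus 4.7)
The plan is to combine the $\tfrac{5}{3}$ bound of Theorem~\ref{thm:5/3} with a minimum $T$-join argument whose parity-correction set $T$ is dictated by the locations of the negative edges. I would first reduce to the case where $(G,\sigma)$ is 2-edge-connected by block decomposition: every bridge of a flow-admissible signed graph must lie on a specific structural position, and a theorem for blocks implies a theorem for $G$. Within the switching-equivalence class of $\sigma$, fix a representative minimizing the number of negative edges, and denote this set by $E_N$. Let $T\subseteq V(G)$ be the set of vertices incident with an odd number of edges of $E_N$, and let $J$ be a minimum $T$-join of $G$. Then $J\triangle E_N$ is an even subgraph, and so decomposes into edge-disjoint cycles; each positive cycle in this decomposition is already a signed-circuit, while the negative cycles need to be paired into barbells.

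Given this set-up, the key remaining steps would be: (a) pair the negative cycles of $J\triangle E_N$ using short connecting paths, producing long barbells as signed-circuits; (b) apply Theorem~\ref{thm:5/3} to $E(G)\setminus(J\triangle E_N)$ (viewed as an ordinary bridgeless graph, possibly augmented by $J$) to obtain a circuit cover of length at most $\tfrac{5}{3}$ times its size. Putting these two covers together with the doubled edges of the barbell paths, the accounting contributes $|E(G)\setminus E_N|+2|E_N|$ for the negative-edge doubling, $\tfrac{5}{3}|E(G)\setminus(J\triangle E_N)|$ for the residual ordinary cover, and a controlled overhead for $|J|$ and for the pairing paths; with the $T$-join length controlled as below, the total should telescope to at most $\tfrac{25}{6}|E(G)|$.

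The principal obstacle is obtaining a sharp upper bound on $|J|$: the naive estimate $|J|\le\tfrac{1}{2}|E(G)|$ is too weak, and one must exploit the fact that the $T$-vertices come in pairs naturally connected through $E_N$ itself. A secondary challenge is the pairing of negative cycles into barbells: the connecting paths contribute with multiplicity two to the cover, so the matching among negative cycles must be chosen to minimise the total doubled length (ideally reusing edges of $J$ to avoid double-counting). I expect the cycle-pairing step, rather than the high-level accounting, to be where the constant $\tfrac{25}{6}$ is actually determined, and also the place where the authors' new minimum $T$-join bound --- $|J|\le|E(G)|-\tfrac{1}{2}|E(\widehat{\,G\,})|$ --- would allow the further improvement to $\tfrac{19}{6}$ announced in the abstract.
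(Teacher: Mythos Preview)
This theorem is not proved in the present paper: it is quoted as a prior result of Chen and Fan~\cite{CF}, and the paper's own contribution is the stronger bound $\tfrac{19}{6}|E(G)|$ of Theorem~\ref{thm:main}. There is therefore no proof here to compare your proposal against directly.

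Your outline does resemble the architecture of the paper's proof of Theorem~\ref{thm:main} (build an even subgraph through the negative edges via a $T$-join, decompose it into cycles, pair the negative cycles into barbells, then cover the remainder with Theorem~\ref{thm:5/3}), but as written it has genuine gaps. The reduction to the 2-edge-connected case is not a plain block decomposition: flow-admissible signed graphs can have bridges, and a signed circuit (a long barbell) may cross a bridge, so a ``theorem for blocks implies theorem for $G$'' statement is not automatic; the paper handles this by an induction (Theorem~\ref{thm:cover-bridge}) that attaches auxiliary negative loops on each side of a bridge and then splices the resulting barbells together. More seriously, your step~(b) applies Theorem~\ref{thm:5/3} to $E(G)\setminus(J\triangle E_N)$, but removing an even subgraph from a 2-edge-connected graph need not leave a bridgeless graph, and the hedge ``possibly augmented by $J$'' does not repair this; the paper sidesteps the issue by applying Theorem~\ref{thm:5/3} only to $\widehat{G^+}$, which is bridgeless by definition. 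Finally, your accounting is not carried out: the term $|E(G)\setminus E_N|+2|E_N|$ has no clear source in the construction you describe, the barbell-path overhead is left unquantified, and ``should telescope'' is not a computation. In the paper the corresponding bookkeeping is done precisely in Lemma~\ref{lem:S-cover} (and the Claim inside its proof), and that is indeed where the constant is pinned down.
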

 
Recently, Kaiser et. al. \cite{KLMR} announce that every flow-admissible signed graph has a signed-circuit cover with length at most $\frac{11} 3 |E(G)|$. For a 2-edge-connected cubic signed graph $(G,\sigma)$, the present authors  \cite{WY} show that $(G,\sigma)$ has a signed-circuit cover with length less than   $\frac{26} 9 |E(G)|$.

In this paper, we investigate the minimum $T$-join and the shortest signed-circuit cover problem. We show that a minimum $T$-join of a connected graph $G$ has at most $|E(G)|-\frac 1 2 |E(\widehat{\,G\,})|$ edges where $\widehat{\,G\,}$ is the maximal bridgeless subgraph of $G$. The bound is tight. Further, we are able to use the bound of the size of a minimum $T$-join to show the following result, which improves previous bounds on the length of shortest signed-circuit cover.

\begin{theorem}\label{thm:main}
Every flow-admissible signed graph $(G,\sigma)$ has a signed-circuit cover with length less than  $\frac{19}{6}|E(G)|.$ 
\end{theorem}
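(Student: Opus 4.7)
The plan is to combine the main $T$-join bound of this paper with the Bermond--Jackson--Jaeger $\tfrac{5}{3}$-bound (Theorem~\ref{thm:5/3}) in a two-layer construction: the inner layer uses the $T$-join together with the negative edges to produce an even subgraph from which signed-circuits (positive cycles and barbells) are assembled, while the outer layer applies Theorem~\ref{thm:5/3} to the remaining bridgeless part.

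First I would switch the signature of $(G,\sigma)$ inside its switching class so that the set $E^-$ of negative edges has smallest possible cardinality; let $T:=\{v\in V(G):d_{E^-}(v)\text{ is odd}\}$. Flow-admissibility of $(G,\sigma)$ together with the switching-minimality of $E^-$ forces $|T\cap V(H)|$ to be even in every component $H$, so a $T$-join exists. Apply the main $T$-join theorem of the paper to obtain $J$ with
\[
|J|\le |E(G)|-\tfrac{1}{2}|E(\widehat{\,G\,})|.
\]
Then $F:=J\triangle E^-$ is an even subgraph of $G$ and decomposes into edge-disjoint cycles. The positive cycles of the decomposition are already signed-circuits; the negative cycles must be paired into barbells, which I would accomplish using short paths inside the bridgeless core $\widehat{\,G\,}$, exploiting $2$-edge-connectivity of its blocks to connect pairs of negative cycles living in the same block. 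Each connecting path is traversed twice as part of its barbell.

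After this, the still-uncovered edges lie in $\widehat{\,G\,}$ and, together with what the connecting paths avoid, form a bridgeless subgraph; Theorem~\ref{thm:5/3} produces a circuit cover of this residue of length at most $\tfrac{5}{3}$ times its size, and every such circuit is a positive cycle in the ambient signed graph, hence a signed-circuit. Collecting everything, the cover length is at most
\[
|F| + 2\cdot\ell(\text{connecting paths}) + \tfrac{5}{3}|E(\widehat{\,G\,})|,
\]
which, after using $|F|\le |J|+|E^-|$, the $T$-join bound, and a careful accounting of how many edges of $\widehat{\,G\,}$ are consumed by $F$ and by the connecting paths, is intended to collapse to $\tfrac{19}{6}|E(G)|$.

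The main obstacle I anticipate is controlling the total length of the paths used to assemble barbells while simultaneously ensuring that they share few edges with the residual bridgeless graph to which Theorem~\ref{thm:5/3} is applied; the slack term $\tfrac{1}{2}|E(\widehat{\,G\,})|$ in the $T$-join bound is precisely what makes the constant $\tfrac{19}{6}$ achievable rather than a weaker bound derivable from the trivial $|J|\le |E(G)|$. A secondary technical difficulty is the treatment of bridges: each bridge in $E(G)\setminus E(\widehat{\,G\,})$ must lie on the connecting path of some barbell and is therefore covered twice, so the gap $|E(G)|-|E(\widehat{\,G\,})|$ contributes disproportionately and must be absorbed by the savings in the $T$-join bound. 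Finally, I would handle the degenerate cases (components containing at most one negative cycle, or a single vertex of $T$ isolated from the bridgeless core) by a separate case-check, since the pairing step fails there and either a pure positive-cycle cover or an ad hoc short-barbell construction is needed to close the proof.
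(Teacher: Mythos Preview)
Your outline has a genuine gap at the step where you apply Theorem~\ref{thm:5/3} to the ``residue'' and assert that \emph{every such circuit is a positive cycle in the ambient signed graph}. That is only true if the residue contains no negative edges, i.e.\ if $E^-\subseteq F$. But you take the $T$-join $J$ in $G$ rather than in $G^+$, so nothing prevents $J$ from using negative edges; whenever it does, those edges drop out of $F=J\triangle E^-$ and remain in the residue, and the $\tfrac{5}{3}$-cover may then produce negative cycles, which are \emph{not} signed-circuits. The paper avoids this by working with $\widehat{G^+}$ (the bridgeless core of the \emph{positive} subgraph) throughout: the $T$-join is taken in $G^+$, so the even subgraph $H$ satisfies $E^-(H)=S$, and the final $\tfrac{5}{3}$-bound is applied to $\widehat{G^+}$, whose cycles are automatically positive. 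Even if you patch this by moving the $T$-join into $G^+$, your ``residue is bridgeless'' claim is unjustified: the edges not in $F$ need not form a bridgeless graph, which is why the paper simply covers all of $\widehat{G^+}$ (accepting some double-covering) rather than only the uncovered part.

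The second gap is the pairing of negative cycles into barbells. You say you will use ``short paths inside the bridgeless core'' and that the slack $\tfrac{1}{2}|E(\widehat{\,G\,})|$ will absorb their cost, but you give no mechanism to bound $\ell(\text{connecting paths})$; a naive pairing can cost $\Theta(|E(G)|)$ per barbell. The paper handles this with real work (Lemma~\ref{lem:S-cover}): it contracts the negative cycles, takes a \emph{second} minimum $T$-join in the contracted graph (bounded via Corollary~\ref{cor:T-join}), and introduces the parameter $\tau(G,\sigma)$ to control how much of the cycle edges the resulting paths reuse. On top of that, bridges are dealt with by a separate induction (Theorem~\ref{thm:cover-bridge}) that splices barbells across cut-edges using an auxiliary negative loop on each side, together with Lemma~\ref{lem:t-members-2} to match multiplicities. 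Your ``separate case-check'' remark does not substitute for this; without it the arithmetic you sketch does not close to $\tfrac{19}{6}$.
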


Particularly, if $(G,\sigma)$ is 2-edge-connected and has even negativeness (a detailed definition is given in the next section), then $(G,\sigma)$ has a signed-circuit cover with length less than $\frac 8 3 |E(G)|$.

%%%%%%%%%%%%%%%%%%%%%%%%%%%%%%
\section{Circuit covers and $T$-joins}
%%%%%%%%%%%%%%%%%%%%%%%%%%%%%%

Let $G$ be a graph. The maximal bridgeless subgraph of $G$ is denoted by $\widehat{\,G\,}$. If $G$ is a bridgeless graph, then $\widehat {\,G\,}=G$. A {\em circuit $k$-cover} of a graph $G$ is a family of circuits which covers every edge of $G$ exactly $k$ times.  

\begin{theorem}[Bermond, Jackson and Jeager \cite{BJJ}]\label{lem:7-cycle-4-cover}
Every bridgeless graph $G$ has a 4-cover by 7 even-subgraphs.
\end{theorem}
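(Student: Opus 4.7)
The plan is to derive the seven even subgraphs from three ``base'' even subgraphs whose union is all of $E(G)$, and then combine them via symmetric differences. The only nontrivial input from flow theory is Jaeger's $8$-flow theorem: since $G$ is bridgeless, it admits a nowhere-zero $\mathbb{Z}_2^3$-flow, which is equivalent to a triple of even subgraphs $E_1, E_2, E_3$ of $G$ with $E_1 \cup E_2 \cup E_3 = E(G)$ (the $i$-th coordinate of the flow at an edge $e$ records whether $e \in E_i$).

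Given such $E_1, E_2, E_3$, for each nonempty $T \subseteq \{1,2,3\}$ let $F_T$ denote the symmetric difference of those $E_i$ with $i \in T$. Because the symmetric difference of even subgraphs is itself an even subgraph, each of the seven sets $F_T$ is an even subgraph of $G$. The claim is that $\{F_T : \emptyset \neq T \subseteq \{1,2,3\}\}$ is the desired $4$-cover.

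To verify this, fix an edge $e \in E(G)$ and set $S_e := \{i : e \in E_i\}$. The nowhere-zero condition forces $S_e \neq \emptyset$, and working in the cycle space over $\mathbb{F}_2$ gives $e \in F_T$ if and only if $|T \cap S_e|$ is odd. A short enumeration of the seven nonempty $T \subseteq \{1,2,3\}$ for each value of $|S_e| \in \{1,2,3\}$ shows that in every case exactly four of them meet $S_e$ in an odd number of elements. Hence $e$ lies in precisely four of the $F_T$, proving the theorem.

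The main obstacle is securing the three base even subgraphs whose union is $E(G)$; this is Jaeger's $8$-flow theorem and is used here as a black box. The remainder of the proof is the transparent parity count above and presents no further technical difficulty.
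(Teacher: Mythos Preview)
Your argument is correct and is in fact essentially the original proof given by Bermond, Jackson and Jaeger in \cite{BJJ}. Note that the present paper does not prove this theorem at all: it is quoted from the literature and used as a black box, so there is no ``paper's own proof'' to compare against. Your derivation---obtain three even subgraphs $E_1,E_2,E_3$ with $E_1\cup E_2\cup E_3=E(G)$ from Jaeger's $8$-flow theorem, form the seven nonempty symmetric differences $F_T$, and verify by the parity count that each edge lies in exactly four of them---is the standard route and is fully rigorous as written.
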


Let $G$ be a graph and $T$ be a set of vertices of $G$. A {\em $T$-join} $J$ of $G$ with respect to $T$ is a subset of edges of $G$ such that $d_J(v)\equiv 1\pmod 2$ if and only if $v\in T$, where $d_J(v)$ is the number of edges of $J$ incident with the vertex $v$. A $T$-join in a connected graph $G$ is easy to construct: partition $T$ into $|T|/2$ pairs of vertices; and, for each pair of vertices, find a path joining them; and then the symmetric difference of all edge sets of these $|T|/2$ paths generates a $T$-join of $G$. On the other hand, the subgraph induced by a $T$-join always contains $|T|/2$ paths joining $|T|/2$ pairs of vertices of $T$ which form a partition of $T$. The symmetric difference of a $T$-join and the edge set of an even-subgraph is another $T$-join of $G$.

A $T$-join is {\em minimum} if it has minimum number of edges among all $T$-joins. A minimum $T$-join $J$ induces an acyclic subgraph of $G$ (otherwise, the symmetric difference of $J$ and a cycle induced by edges of $J$ is a smaller $T$-join of $G$). 
%So a minimum $T$-join induces $|T|/2$ edge-disjoint paths. 
There is a strong polynomial-time algorithm to find a minimum $T$-join in a given graph $G$ (see \cite{EJ}). The minimum $T$-join problem and its variations have applications to many other problems in combinatorial optimization and graph theory, for example, the Chinese postman problem \cite{EJ}, Traveling Salesman Problem (TSP) \cite{CFG} and others (see  \cite{Frank, AG}). 

\begin{theorem}\label{thm:T-join}
Let $G$ be a connected graph and $T$ be a set of vertices of even size. Then a minimum $T$-join of $G$ has at most $|E(G)|-\frac 1 2 |E(\widehat{\,G\,})|$ edges. 
\end{theorem}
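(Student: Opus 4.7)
The plan is to bound $|J^*|$ for a minimum $T$-join $J^*$ by splitting the count into edges inside and outside $\widehat{\,G\,}$. Outside $\widehat{\,G\,}$ the bound is trivial, so the work is to show that at most half of the edges of $\widehat{\,G\,}$ lie in $J^*$. For that, I would invoke Theorem~\ref{lem:7-cycle-4-cover}: $\widehat{\,G\,}$ admits a $4$-cover by seven even subgraphs $F_1,\ldots,F_7$ (each edge of $\widehat{\,G\,}$ is in exactly four $F_i$'s).

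The key observation is that for any even subgraph $F$ of $G$, the set $J^*\triangle F$ is again a $T$-join (the parity of $d_F(v)$ is even everywhere, so symmetric difference preserves the odd/even-degree pattern defining a $T$-join). Since $J^*$ is minimum, $|J^*|\le |J^*\triangle F_i|=|J^*|+|F_i|-2|J^*\cap F_i|$, which rearranges to
\[
|J^*\cap F_i|\;\le\;\tfrac{1}{2}|F_i|\qquad (i=1,\ldots,7).
\]
Summing over $i$ and using that each edge of $\widehat{\,G\,}$ appears in exactly four of the $F_i$'s, the left side equals $4|J^*\cap E(\widehat{\,G\,})|$ and the right side equals $\tfrac12\sum_i|F_i|=\tfrac12\cdot 4|E(\widehat{\,G\,})|=2|E(\widehat{\,G\,})|$, giving $|J^*\cap E(\widehat{\,G\,})|\le \tfrac12 |E(\widehat{\,G\,})|$.

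Combining this with the trivial estimate $|J^*\setminus E(\widehat{\,G\,})|\le |E(G)|-|E(\widehat{\,G\,})|$ yields
\[
|J^*|\;=\;|J^*\cap E(\widehat{\,G\,})|+|J^*\setminus E(\widehat{\,G\,})|\;\le\;\tfrac{1}{2}|E(\widehat{\,G\,})|+|E(G)|-|E(\widehat{\,G\,})|\;=\;|E(G)|-\tfrac{1}{2}|E(\widehat{\,G\,})|,
\]
as required.

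I do not expect any single step to be hard: the only substantive ingredient is the Bermond--Jackson--Jaeger $4$-cover, and the averaging argument itself is straightforward once one notices the stability of the $T$-join property under XOR with an even subgraph. The mild subtlety is simply to be careful that even subgraphs of $\widehat{\,G\,}$ are also even subgraphs of $G$ (they use no edges outside $\widehat{\,G\,}$), which is automatic, so the symmetric differences $J^*\triangle F_i$ are legitimate $T$-joins of $G$ and the minimality comparison applies.
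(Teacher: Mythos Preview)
Your proof is correct and is essentially the same argument as the paper's: both invoke the Bermond--Jackson--Jaeger $4$-cover of $\widehat{\,G\,}$ by seven even subgraphs, use that $J^*\triangle F_i$ is again a $T$-join, and average the resulting inequalities. The only cosmetic difference is that the paper averages over the eight sets $J^*,J^*\triangle F_1,\ldots,J^*\triangle F_7$ directly and counts edge multiplicities ($4$ inside $\widehat{\,G\,}$, at most $8$ outside), whereas you first isolate the inequality $|J^*\cap F_i|\le\tfrac12|F_i|$ and then split $|J^*|$ into its parts inside and outside $\widehat{\,G\,}$; the underlying computation is the same.
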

\begin{proof}  Let $G$ be a connected graph and $J$ be a minimum $T$-join of $G$. Note that $\widehat{\,G\,}$ is bridgeless. By Lemma~\ref{lem:7-cycle-4-cover}, $\widehat{\,G\,}$ has 7-even-subgraph  4-cover  $\{H_1,H_2,\ldots,H_7\}$. Since each $H_i$ is an even-subgraph, the symmetric difference  $H_i\oplus J$ is still a $T$-join of $G$, denoted by $J_i$ for $i\in \{1,2,\ldots,7\}$. Let $\mathcal J=\{J, J_1,\ldots, J_7\}$. 
By the choice of $J$, it holds that $|J|\leq |J_i|$ for $i\in \{1,2,\ldots,7\}$.

Let $e$ be an edge of  $\widehat{\, G\,}$. If $e\notin J$, then $e\in J_i$ if and only if $e\in E(H_i)$ for $i\in \{1,\ldots, 7\}$. Hence $e$ is covered exactly four times by the family of $T$-joins $\mathcal J$ if $e\notin J$.  If $e\in J$, then $e\in J_i$ if and only if $e\notin E(H_i)$ for $i\in \{1,\ldots,7\}$. Since $\{H_1,\ldots, H_7\}$ covers $e$ exactly four times, there are exactly three $T$-joins in $\{J_1,\ldots, J_7\}$ containing $e$. Together with $J$, the family of $T$-joins $\mathcal J$ covers $e$ exactly four times. Therefore, $\mathcal J$ covers every edge $e$ of  $\widehat{\, G\,}$ exactly four times.  

For an edge $e\in E(G)\backslash E(\widehat{\, G\,})$, it is covered by $\mathcal J$ at most eight times. Hence, $J$ covers every edge of $\widehat{\, G\,}$ exactly four times and other edges of $G$ at most eight times.  Therefore, 
\[\begin{aligned} 
|J| &\leq\frac 1 8 \Big (|J|+\sum\limits_{i=1}^7|E(J_i)|\Big) \leq\frac 1 8 \Big (4|\widehat{\,G\,}|+8(|E(G)|- |E(\widehat{\,G\,})|)\Big )\\
          &=|E(G)|-\frac 1 2 |E(\widehat{\,G\,})|.
\end{aligned}\]
This completes the proof. 
\end{proof}

For 2-edge-connected graphs, the following result is a direct corollary of Theorem~\ref{thm:T-join}.

\begin{corollary}\label{cor:T-join}
Let $G$ be a 2-edge-connected graph and $T$ be an even subset of vertices. Then $G$ has a $T$-join of size at most $|E(G)|/2$.
\end{corollary}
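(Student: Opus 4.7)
The plan is to read this off Theorem~\ref{thm:T-join} essentially for free, by checking that the hypotheses of that theorem are met and that $\widehat{\,G\,}$ collapses to $G$ under the stronger connectivity assumption. The key observation is definitional: a 2-edge-connected graph is by convention connected and contains no bridge, so there is nothing to remove when forming the maximal bridgeless subgraph, and hence $\widehat{\,G\,}=G$.

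Concretely, I would first record that $G$ is connected (so that Theorem~\ref{thm:T-join} applies) and that $T$ is assumed to have even cardinality, which is exactly the hypothesis under which a $T$-join exists. Then I would note that since $G$ has no bridge, $\widehat{\,G\,}=G$, and therefore
\[
|E(G)|-\tfrac{1}{2}|E(\widehat{\,G\,})| \;=\; |E(G)|-\tfrac{1}{2}|E(G)| \;=\; \tfrac{1}{2}|E(G)|.
\]
Applying Theorem~\ref{thm:T-join} to $G$ and $T$ then immediately yields a $T$-join with at most $\tfrac{1}{2}|E(G)|$ edges, which is the desired bound.

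There is no genuine obstacle here; the whole content is packaged inside Theorem~\ref{thm:T-join}, and the corollary simply advertises the 2-edge-connected special case in the cleanest form for later use (for instance, when feeding it into the signed-circuit cover arguments of Theorem~\ref{thm:main}). If anything, the only small sanity check is to confirm that the convention \textquotedblleft 2-edge-connected implies bridgeless and connected\textquotedblright\ is the one used in the paper, which is consistent with the notation introduced at the start of Section~2.
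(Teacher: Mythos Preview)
Your argument is correct and matches the paper's approach exactly: the paper states this as a direct corollary of Theorem~\ref{thm:T-join} without further proof, relying on precisely the observation that a 2-edge-connected graph $G$ satisfies $\widehat{\,G\,}=G$.
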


\noindent{\bf Remark.}  The bounds in Theorem~\ref{thm:T-join} and Corollary~\ref{cor:T-join} are sharp. For example, $G$ is a tree with all even-degree vertices having degree two  and $T$ is the set of all odd-degree vertices, or $G$ is an even cycle  
and $T$ consists of the two antipodal vertices.  \medskip

In the following, Theorem \ref{thm:T-join} and Corollary~\ref{cor:T-join} will be applied to prove technical lemmas on signed-circuit covers, which are particularly useful in the proofs of our main results. 

For a signed graph $(G,\sigma)$, denote the set of all negative edges of $(G,\sigma)$ by $E^-(G,\sigma)$ and the set of all positive edges by $E^+(G,\sigma)$. We always use $G^+$ to denote the subgraph of $G$ induced by all edges in $E^+(G,\sigma)$. Then $\widehat{G^+}$ stands for the maximal bridgeless subgraph of $G^+$.

\begin{lemma}\label{lem:shortest-cycle-edgesed}
Let $(G,\sigma)$ be a signed graph such that $G^+$ is connected.  
For any $S\subseteq E^-(G,\sigma)$, the signed graph $(G,\sigma)$ has an even-subgraph $H$ such that
\[S\subseteq H\subseteq G^+\cup S\quad \mbox{and} \quad |E(H)|\leq |E(G)|-\frac 1 2 |E(\widehat{G^+})|.\]
\end{lemma}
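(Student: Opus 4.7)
The plan is to reduce the problem to finding a $T$-join in the connected graph $G^+$ and then to invoke Theorem~\ref{thm:T-join}. I let $T \subseteq V(G)$ be the set of vertices $v$ with $d_S(v)$ odd; since $\sum_{v \in V(G)} d_S(v) = 2|S|$, the set $T$ has even cardinality. Moreover, for any vertex $v \in T$ not incident with a positive edge, no subgraph $H$ with $S \subseteq H \subseteq G^+ \cup S$ could be even at $v$, so implicitly $T \subseteq V(G^+)$, and I regard $T$ as a vertex subset of the connected graph $G^+$.

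Next, I apply Theorem~\ref{thm:T-join} to $G^+$ and $T$ to obtain a minimum $T$-join $J$ of $G^+$ satisfying
\[
|J| \;\leq\; |E(G^+)| - \tfrac{1}{2}|E(\widehat{G^+})|.
\]
I then set $H := J \cup S$ (a disjoint union, since $J \subseteq E^+(G,\sigma)$ while $S \subseteq E^-(G,\sigma)$). By construction $S \subseteq H \subseteq G^+ \cup S$. At every vertex $v$, the $T$-join property gives $d_J(v) \equiv d_S(v) \pmod 2$, so $d_H(v) = d_J(v) + d_S(v)$ is even; hence $H$ is an even subgraph of $(G,\sigma)$.

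For the length bound, the disjointness of $S$ and $E(G^+)$ together with $S \cup E(G^+) \subseteq E(G)$ yields $|S| + |E(G^+)| \leq |E(G)|$, and therefore
\[
|E(H)| \;=\; |J| + |S| \;\leq\; |E(G^+)| + |S| - \tfrac{1}{2}|E(\widehat{G^+})| \;\leq\; |E(G)| - \tfrac{1}{2}|E(\widehat{G^+})|,
\]
as required. The only substantive step is the invocation of Theorem~\ref{thm:T-join}; the rest is a routine parity bookkeeping translating between even subgraphs containing $S$ and $T$-joins of $G^+$. The main conceptual obstacle, and the reason Theorem~\ref{thm:T-join} is needed in its sharp form, is that one must obtain the $T$-join bound in terms of the maximal bridgeless subgraph $\widehat{G^+}$ rather than $G^+$ itself, so that the $-\tfrac{1}{2}|E(\widehat{G^+})|$ savings survive in the final estimate.
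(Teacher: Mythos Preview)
Your proof is correct and follows essentially the same approach as the paper: define $T$ as the set of vertices of odd degree in $G[S]$, apply Theorem~\ref{thm:T-join} to obtain a minimum $T$-join $J$ of $G^+$, set $H = S \cup J$, and bound $|E(H)| = |S| + |J|$ using $|S| + |E(G^+)| \le |E(G)|$. Your extra remark that $T \subseteq V(G^+)$ must hold (else no even $H$ with $S\subseteq H\subseteq G^+\cup S$ exists) is a minor technical point the paper leaves implicit.
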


\begin{proof} 
Let $(G,\sigma)$ is a signed graph with  $G^+$ being connected. For any $S\subseteq E^-(G,\sigma)$, let $G[S]$
 be the subgraph $G[S]$ of $G$ induced by edges in $S$. Let $T$ be the set of all vertices of odd-degree in $G[S]$, which has an even number of vertices.
Let $J$ be a minimum $T$-join of $G^+$ with respect to  $T$. By Theorem~\ref{thm:T-join}, it follows that $|J|\le |E(G^+)|-\frac 1 2 |E(\widehat{G^+})|$. 

Let $H=G[S]\cup J$. Then $H$ is an even-subgraph which satisfies
\[S\subseteq H\subseteq G^+\cup S.\]
 Since $S\cap E(G^+)=\emptyset$, it follows
\[\begin{aligned} 
|E(H)| &= |E(G[S]\cup J)|=|S|+|J| \le |S|+ |E(G^+)|-\frac 1 2 |E(\widehat{G^+})|\\
          &\le |E(G)|-\frac  1 2 |E(\widehat{G^+})|.
\end{aligned}\]
This completes the proof.
\end{proof}

Now, we are going to define a function on signed graphs, which plays a key role in our proofs. Let $\mathcal B$ be the union of all barbells of $(G,\sigma)$. For any barbell $B\in \mathcal B$, recall that $\widehat{\,B\,}$ is the maximum bridgeless subgraph of $B$, which is the union of two negative cycles of $B$. Define
\[
\tau(G,\sigma)=\left\{ \begin{array}{ll}
|E(G)| & \mbox{if}\ \mathcal B=\emptyset; \\
\min\big \{|E(\widehat{\,B\,})|\, \big |\,B\in \mathcal B\big \} & otherwise.
 \end{array} \right.
\]  
The following result connects the function $\tau(G,\sigma)$ and signed-circuit cover, which is the key lemma to prove our main theorem.   \medskip

\begin{lemma}\label{lem:S-cover}
Let $(G,\sigma)$ be a 2-edge-connected  
signed graph such that $G^+$ is connected. If $S\subseteq E^-(G,\sigma)$ has an even number of edges, 
then $(G,\sigma)$ has  a family $\mathcal F$ of signed-circuits such that:\\
(i) $\mathcal F$ covers $S$ and all 2-edge-cuts containing an edge from $S$; \\
(ii) the length of $\mathcal F$ satisfies 
\[
\ell(\mathcal F)\leq\frac 1 2 \big (3|E(G)|-|E(\widehat{G^+})|-\tau(G,\sigma)\big );
\]
(iii) each negative loop of $S$ (if exists) is contained in exactly one barbell of $\mathcal F$.
\end{lemma}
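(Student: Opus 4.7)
The plan is to apply Lemma~\ref{lem:shortest-cycle-edgesed} to cover $S$ by an even subgraph, decompose it into cycles, and then pair the resulting negative cycles into barbells by routing connecting paths through a suitable contraction of $G$. The crucial $-\tau(G,\sigma)/2$ saving in the length will come from the $T$-join step.

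First I apply Lemma~\ref{lem:shortest-cycle-edgesed} to $S$ and obtain an even subgraph $H$ with $S\subseteq H\subseteq G^+\cup S$ and $|E(H)|\le |E(G)|-\frac{1}{2}|E(\widehat{G^+})|$. I decompose $H$ into edge-disjoint cycles and put every positive cycle directly into $\mathcal F$. Since $|S|$ is even and a cycle of $H$ is negative exactly when it contains an odd number of edges of $S$, the number of negative cycles is even, say $2m$. If $m=0$ the bound is immediate because $\tau(G,\sigma)\le |E(G)|$ and $\ell(\mathcal F)\le |E(H)|$, so I assume $m\ge 1$ and call the negative cycles $D_1,\ldots,D_{2m}$.

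Next I form the contracted graph $G^{*}=G/(D_1\cup\cdots\cup D_{2m})$, in which each $D_i$ collapses to a super-vertex $v_i$, with super-vertices identified whenever the underlying cycles share a vertex. Because contraction preserves 2-edge-connectivity, $G^{*}$ is 2-edge-connected with $|E(G^{*})|=|E(G)|-\sum_i |E(D_i)|$. Setting $T=\{v_1,\ldots,v_{2m}\}$, Corollary~\ref{cor:T-join} applied to $G^{*}$ produces a $T$-join $J^{*}$ with $|J^{*}|\le \frac{1}{2}|E(G^{*})|$. The critical observation is that the pair $D_1,D_2$ itself forms a barbell in $(G,\sigma)$---long if $D_1,D_2$ are vertex-disjoint (joined by any path in $G$), short if they share a vertex---whose bridgeless part has $|E(D_1)|+|E(D_2)|$ edges; by the minimality defining $\tau$ this sum is at least $\tau(G,\sigma)$, so $\sum_i |E(D_i)|\ge \tau(G,\sigma)$ and therefore $|J^{*}|\le \frac{1}{2}(|E(G)|-\tau(G,\sigma))$.

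Finally I decompose $J^{*}$ into $m$ edge-disjoint paths pairing the $v_i$'s, lift each to a path in $G$ meeting its two designated $D_i,D_j$ only at the endpoints, and combine it with $(D_i,D_j)$ into a long or short barbell, which I add to $\mathcal F$. This yields $\ell(\mathcal F)=|E(H)|+|J^{*}|\le \frac{1}{2}\bigl(3|E(G)|-|E(\widehat{G^+})|-\tau(G,\sigma)\bigr)$, proving (ii). Condition (i) holds because $S\subseteq H$ and, for any 2-edge-cut $\{e_1,e_2\}$ with $e_1\in S$, the evenness of $H$ forces $e_2\in H\subseteq \bigcup\mathcal F$. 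Condition (iii) is automatic because any negative loop of $S$ is its own cycle $D_i$ and is absorbed into the super-vertex $v_i$ under contraction, so it is never used by $J^{*}$ and lies in exactly one barbell of $\mathcal F$. The main technical obstacle I anticipate is the lifting step when a path of $J^{*}$ enters a super-vertex $v_k$ with $k\ne i,j$ at one vertex of $D_k$ and leaves from another: one must either select $J^{*}$ whose path decomposition avoids interior super-vertices, or bridge the discrepancy inside $D_k$ by a short traversal whose cost fits within the available slack, and similarly take some care in choosing the cycle decomposition of $H$ so that pairs of $D_i$'s behave well under contraction.
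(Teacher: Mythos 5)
Your outline follows the same route as the paper (cover $S$ by the even subgraph $H$ from Lemma~\ref{lem:shortest-cycle-edgesed}, decompose into cycles, contract the negative cycles, apply Corollary~\ref{cor:T-join}, and pair the negative cycles into barbells via the $T$-join paths), but it stops exactly where the real work begins, and the two issues you flag at the end are genuine gaps rather than routine details. First, your treatment of negative cycles that share vertices does not work as stated: if two negative cycles of the decomposition meet in two or more vertices, their union is not (the bridgeless part of) a barbell, so your inequality $|E(D_1)|+|E(D_2)|\ge\tau(G,\sigma)$ can fail, and identifying super-vertices also breaks the parity bookkeeping for the set $T$. The paper resolves this by choosing the cycle decomposition of $H$ to maximize the number of positive cycles, which forces any two negative cycles to meet in at most one vertex; those meeting in exactly one vertex are paired off immediately as short barbells, and only the remaining pairwise disjoint negative cycles are contracted.

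Second, and more seriously, the lifting problem is not an incidental technicality but the heart of the proof, and ``bridge the discrepancy inside $D_k$ by a short traversal whose cost fits within the available slack'' is precisely the statement that needs proof. When a $T$-join path passes through an intermediate contracted cycle, the lift must borrow a segment of that cycle, and these borrowed edges are counted a second time in the barbell lengths; a naive choice of segments can exceed the slack $\frac12\bigl(|E(Q)|-\tau(G,\sigma)\bigr)$, where $Q$ is the union of the contracted cycles. Note also that your accounting needs the sharper bound $|J^{*}|\le\frac12\bigl(|E(G)|-|E(Q)|\bigr)$, reserving all of $\frac12\bigl(|E(Q)|-\tau\bigr)$ for the borrowed segments, rather than weakening to $\frac12\bigl(|E(G)|-\tau\bigr)$ up front. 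The paper's argument for this is the Claim in its proof: among all lifts $J'$ of the minimum $T$-join with $J'\subseteq J\cup E(Q)$, take one minimizing the intersection with $E(Q)$; acyclicity of $J'$ gives two end-cycles $C_1,C_2$ completely avoided by $J'$, and replacing $J'$ by $J'\oplus E\bigl(Q\setminus(C_1\cup C_2)\bigr)$ (which is again a valid lift) shows $|J'\cap E(Q)|\le\frac12\bigl(|E(Q)|-|E(C_1)|-|E(C_2)|\bigr)\le\frac12\bigl(|E(Q)|-\tau(G,\sigma)\bigr)$. Without this complementation argument (or a substitute), your bound in (ii) is unproved; the rest of your proposal, including the verification of (i) and (iii), is consistent with the paper once these two steps are supplied.
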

\begin{proof}
By Lemma~\ref{lem:shortest-cycle-edgesed},  
$(G,\sigma)$ has an even-subgraph $H$ such that
\[ S\subseteq H\subseteq G^+\cup S\quad \mbox{and} \quad |E(H)|\leq |E(G)|-\frac 1 2 |E(\widehat{G^+})| .\]
Note that, for a 2-edge-cut $R$ of $G$, either $R\cap E(H)=\emptyset$ or $R\subset E(H)$ because $H$ is an even-subgraph.  
So all edges of 2-edge-cuts containing an edge from $S$ belong to $E(H)$. For (i), it suffices to show that $(G,\sigma)$ has a family of signed-circuits covering all edges of $H$. 

Since $H$ is an even-subgraph,  it has a cycle decomposition, i.e., its edges can be decomposed into edge-disjoint cycles. Choose a cycle decomposition $\mathcal C$ of $H$ such that  the number of positive cycles in $\mathcal C$ is maximum over all cycle decompositions of $H$. Then for any two negative cycles $C$ and $C'$ of $\mathcal C$, we have $|V(C)\cap V(C')|\le 1$. Otherwise $C\cup C'$ contains a positive cycle and can be decomposed into a family of cycles $\mathcal C'$, in which at least one cycle is positive. Then $\mathcal C'\cup (\mathcal C\backslash \{C,C'\})$ is a cycle decomposition of $H$ which has more positive cycles than $\mathcal C$, contradicting the choice of $\mathcal C$. For two negative cycles $C$ and $C'$ with exactly one common vertex, the union $C\cup C'$ is a short barbell of $(G,\sigma)$.
Let $\mathcal F=\{D_1,D_2,\ldots,D_m,C_1,C_2,\ldots,C_{2k-1},C_{2n}\}$ such that $D_i$  for $i\in \{1,\ldots,m\}$ is either a positive cycle of $\mathcal C$ or a short barbell consisting of two negative cycles of $\mathcal C$ with exactly one common vertex, and $C_i$  for $i\in \{1,\ldots  2n\}$ is a negative cycle of $\mathcal C$. (Note that $\mathcal F$ has an even number of negative cycle because $|S|$ is even.) By the above choices, we have $C_i\cap C_j=\emptyset$ for $i,j\in \{1,\ldots ,2n\}$ and $i\ne j$.

If $n=0$, then $\mathcal F$ is a signed-circuit cover of $H$ with length
\[\begin{aligned}
\ell (\mathcal F)  & =    |E(H)|\leq |E(G)|-\frac 1 2 |E(\widehat{G^+})|=\frac 1 2 \Big (3|E(G)| - |E(\widehat{G^+})|-|E(G)|\Big ) \\
 & \leq    \frac 1 2 \Big( 3|E(G)|-|E(\widehat {G^+})|-\tau(G,\sigma)\Big).
\end{aligned}
\]
The last inequality above follows from the fact that $\tau(G,\sigma)\le |E(G)|$. So $\mathcal F$ is a  family of signed-circuits of the type we seek. \medskip

So, in the following, assume that $n\ge 1$. Let $Q=\bigcup_{C_i\in \mathcal F} C_i$, and 
let $G'$ be the resulting graph obtained from $G$
by contracting each cycle $C_i$ for $i\in\{1,2,\ldots ,2n\}$
%, and each component of the subgraph induced by $\bigcup\limits_{j=1}^m E(D_j)$
to a single vertex $u_i$. 
%and deleting all edges from all $E(D_j)$ for $j\in\{1,2,\ldots, m\}$. 
Then $E(G')=E(G)\backslash (\bigcup_{C_i\in \mathcal F} E( C_i))=E(G)\backslash E(Q)$.

Let $U=\{u_1,u_2,\ldots, u_{2n}\}$ and $J$ be a minimum $T$-join of $G'$ with respect to $U$. In the graph $G'$, the subgraph induced by $J$ has $n$ edge-disjoint paths $P_1, P_2, \ldots, P_n$ such that each $P_k$ joins two distinct vertices  from $U$. By Corollary~\ref{cor:T-join}, it follows that 
\begin{equation}\label{eq:2}
|J|\le \frac 1 2 |E(G')|=\frac 1 2 (|E(G)|-|E(Q)|).
\end{equation}

Let $J_0$ be the subgraph of $G$ induced by edges in $J$. Then each endvertex $u_i$ of $n$ edge-disjoint paths of $J_0$ becomes a vertex $v_i\in C_i$ of $G$. Let $U'=\{v_1,v_2,...,v_{2n}\}$ where $v_i\in C_i$ for $i\in \{1,2,\ldots, 2n\}$. 
Note that, the edges of each $P_k$ with $k\in \{1,\ldots,n\}$ may induce a disconnected graph in $G$ consisting of subpaths of $P_k$. Then, for each path $P_k$, join all subpaths of $P_k\cap J_0$ 
 by using a segment from each cycle $C_i$ corresponding to an intermediate vertex $v_i\in U'$ of $P_k$, and the edge set  $J_1$ of the resulting subgraph is a $T$-join  of $G$ with respect to $U'$ satisfying $J_1\cap E(G')=J$ and $J_1\subseteq J\cup E(Q)$. 
 
Among all $T$-joins $J_1$ of $G$ with  respect to $U'$ such that $J_1\subseteq J\cup E(Q)$ and $J_1\cap E(G')=J$, let $J'$ be a such $T$-join with $|J'\cap E(H)|$ being minimum. If $J'$ induces a cycle, let $C$ be a such cycle. If $E(C)\subseteq E(Q)$, then $J'\backslash E(C)$ is still a $T$-join of $G$ with respect to $U'$ and has fewer edges than $J'$ which contradicts that $|J'\cap E(H)|$ is minimum. If $E(C)\backslash E(Q)\ne \emptyset$, then contracting edges of $C$ from $E(Q)$ generates a cycle of $J_0$ which contradicts that $J$ is a minimum $T$-join of $G'$. Hence $J'$ induces an acyclic subgraph of $G$.

\medskip

{\bf Claim:} {\sl  $|J'\cap E(Q)|\leq \frac 1 2 (|E(Q)|-\tau(G,\sigma))$.} \medskip

\noindent{\em Proof of the claim.} The subgraph induced by $J'$ in $G$ is acyclic and hence has at least two vertices of degree 1. Since $J'$ is a $T$-join of $G$ with respect to $\{v_1,\ldots, v_{2n}\}$. We may assume that $d_{J'}(v_1)=1$ and $d_{J'}(v_2)=1$ (relabelling if necessary). So $J'\cap E(C_i)=\emptyset $ for $i\in \{1,2\}$. Note that $C_1$ and $C_2$ are disjoint. Since $G$ is 2-edge-connected, $G$ has a path joining $C_1$ and $C_2$ and hence $G$ has a barbell containing $C_1$ and $C_2$. It follows that 
$\tau(G,\sigma) \le |E(C_1)|+|E(C_2)|$.  

Let $ Q'=Q\backslash (C_{1}\cup C_{2})$ and let $ J''=J'\oplus E(Q')$, the symmetric difference of $J'$ and $Q'$. 
Since $Q'\subseteq Q$ and $E(G')=E(G)\backslash E(Q)$, it follows that $J''$ is a $T$-join of $G$ with respect to 
$U'$ such that $|J''\cap E(G')|=|J'\cap E(G')|=|J\cap E(G')|$. By the choice of $J'$, we have 
$|J'\cap E(Q)|\leq | J'' \cap E(Q)|$. Furthermore, $J''\cap E(C_i)=J'\cap E(C_i)=\emptyset$ for $i\in \{1,2\}$.  
Therefore, $J'\cap E(Q)$ and $J''\cap E(Q)$ form a partition of $E(Q')$. Hence,
\[ \begin{aligned} 
 |J' \cap E(Q)|& \leq \frac 1 2 (|J'\cap E(Q)|+|J''\cap E(Q)|)=\frac 1 2 |E(Q')|\\
                     &=\frac 1 2 (|E(Q)|-(|E(C_1)|+|E(C_2)|)) \\
                     &\leq\frac 1 2 (|E(Q)|-\tau(G,\sigma)).
                     \end{aligned}\]
%The last inequality above holds because $\tau(G,\sigma)\le |E(C_1)|+|E(C_2)|$. 
This completes the proof of the claim.

\vskip 3mm

Note that the subgraph induced by $J'$ in $G$ has $n$ edge-disjoint paths joining $n$ pairs of vertices of $U'$, and these $n$ pairs of vertices form a partition of $U'$. So these $n$ edge-disjoint paths together with the negative cycles $C_i$'s containing the endvertices of the paths form $n$ barbells, denoted by
 $B_1,B_2,\ldots,B_n$. 
 %be the barbells constructed by $n$ edge-disjoint paths of $\mathcal P$ such that each path in $\mathcal P$ joins two negative cycles in $\mathcal F$.
Let $\mathcal F'=\{D_1,D_2,\ldots,D_m,B_1,B_2,\ldots,B_n\}$. Then $\mathcal F'$ is a signed-circuit cover of $H$. So the conclusion (i) follows. By the claim and (\ref{eq:2}), 
\[
\begin{aligned} 
|J'| &=|J' \cap E(G')|+|J'\cap E(Q)|=|J|+|J'\cap E(Q)|\\
     & \leq \frac 12 (|E(G)|-|E(Q)|)+\frac 1 2 (|E(Q)|-\tau(G,\sigma))\\
     &=\frac 1 2 (|E(G)|-\tau(G,\sigma)).
\end{aligned}
\]
So the length of $\mathcal F'$ satisfies
$$
\ell(\mathcal F')=|E(H)|+|J'|
\leq|E(H)|+\frac 12 (|E(G)|-\tau(G,\sigma))
\leq\frac 1 2 (3|E(G)|-|E(\widehat{G^+})|-\tau(G,\sigma)),
$$
which completes the proof of (ii). 

If $(G,\sigma)$ has a negative loop, then the negative loop is not contained in $J'$ by the minimality of $J'$. So a negative loop (if exists) is contained in exactly one barbell of $\mathcal F'$, and (iii) follows.
\end{proof}

\section{Shortest signed-circuit covers}

Let $(G,\sigma)$ be a signed graph. An edge cut $R$ of $(G,\sigma)$ is a minimal set of edges whose removal disconnects $G$.  A {\em switching operation $\zeta$} on $R$ is a mapping $\zeta: E(G)\to \{-1,1\}$ such that $\zeta (e)=-1$ if $e\in R$ and $\zeta(e)=1$, otherwise. Two signatures $\sigma$ and $\sigma'$ are {\em equivalent} if there exists an edge cut $R$ such that $\sigma (e)=\zeta (e) \cdot \sigma'(e)$ where $\zeta$ is the switching operation on $R$ \cite{TZ}. The {\em negativeness} of a signed graph $(G,\sigma)$ is the smallest number of negative edges over all equivalent signatures of $\sigma$, denoted by $\epsilon(G,\sigma)$. A signed graph is {\em balanced} if $\epsilon(G,\sigma)=0$. In other words, a balanced signed graph is equivalent to a graph (a signed graph without negative edges).  It is known that a signed graph $(G,\sigma)$ has a circuit cover if and only if $\epsilon(G,\sigma)\ne 1$ and every cut-edge of $G$ does not separate a balanced component, so-called {\em sign-bridgeless} (cf. \cite{CLLZ, mrrs}). The length of a shortest signed-circuit cover of a signed graph $(G,\sigma)$ is denoted by $\scc(G,\sigma)$. The following are some usefuly observations given in \cite{WY}.

\begin{observation}\label{ob:equ}
Let $(G,\sigma)$ be a flow-admissible signed graph and $\sigma'$ be an equivalent signature of $\sigma$.
Then $\scc(G,\sigma)=\scc(G,\sigma').$
\end{observation}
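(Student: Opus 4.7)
The plan is to show that $(G,\sigma)$ and $(G,\sigma')$ have \emph{identical} collections of signed-circuits (as edge subsets of $G$), from which the equality $\scc(G,\sigma) = \scc(G,\sigma')$ is immediate: any signed-circuit cover of $E(G)$ under one signature is automatically a signed-circuit cover under the other, of the same total length, and the minimum is therefore the same on both sides.

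By the definition of equivalence there is an edge cut $R$ of $G$ with $\sigma'(e) = -\sigma(e)$ for $e\in R$ and $\sigma'(e) = \sigma(e)$ otherwise. First I would recall that, because $R$ is a minimal edge cut, it corresponds to a bipartition $(V_1,V_2)$ of $V(G)$ with $R$ being exactly the set of edges joining $V_1$ to $V_2$. A standard parity argument then shows that every cycle $C$ of $G$ meets $R$ in an even number of edges (a closed walk along $C$ must return to its starting side), and loops do not interact with $R$ at all since both endpoints of a loop coincide. Consequently
\[
\prod_{e\in E(C)}\sigma'(e) \;=\; (-1)^{|E(C)\cap R|}\prod_{e\in E(C)}\sigma(e) \;=\; \prod_{e\in E(C)}\sigma(e),
\]
so $C$ is positive (respectively negative) under $\sigma$ if and only if $C$ is positive (respectively negative) under $\sigma'$. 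Hence the sets of positive cycles and of negative cycles of $G$ are the same for both signatures.

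Once cycle signs are invariant, signed-circuits are invariant too: a signed-circuit is either a positive cycle or a barbell (two negative cycles sharing a vertex, or two disjoint negative cycles joined by a minimal path), and each barbell is determined purely by the underlying graph structure together with the designation of which cycles are negative. The collections of signed-circuits of $(G,\sigma)$ and $(G,\sigma')$ are therefore the same subsets of $E(G)$, and the claimed equality of $\scc$ follows. There is no real obstacle here beyond invoking the standard cycle/edge-cut parity lemma; the only minor point is ensuring the argument is clean for multigraphs with loops, which it is, since loops contribute $0$ edges to any edge cut and thus never have their signs switched.
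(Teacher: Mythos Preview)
Your argument is correct: switching on an edge cut preserves the sign of every cycle (by the standard cycle/cut parity), hence preserves the collection of signed-circuits, and the equality of $\scc$ follows immediately. The paper itself does not prove this observation but simply cites it from \cite{WY}; what you have written is exactly the expected switching-invariance argument underlying that citation.
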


By Observation~\ref{ob:equ}, for shortest circuit cover problem, it is sufficient to consider all flow-admissible signed graphs $(G,\sigma)$ with $\epsilon(G,\sigma)$ negative edges.
 
%\begin{observation}\label{ob:1-negative-edge}
%Let $(G,\sigma)$ be a 2-edge-connected signed graph. Then $(G,\sigma)$ has a circuit cover if and only if $\epsilon(G,\sigma)\ne 1$.
%\end{observation}

 \begin{observation}\label{ob:cut}
Let $(G,\sigma)$ be a signed graph with $|E^-(G,\sigma)|=\epsilon(G,\sigma)$. Then every edge cut contains at most
half the number of edges being negative.
\end{observation}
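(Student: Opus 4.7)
The plan is to argue by contradiction using the switching operation: if some edge cut contained strictly more than half negative edges, then switching on that cut would produce an equivalent signature with strictly fewer negative edges, violating the minimality defining $\epsilon(G,\sigma)$.

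More concretely, first I would suppose toward contradiction that there exists an edge cut $R$ of $G$ such that $|R\cap E^-(G,\sigma)| > \tfrac{1}{2}|R|$. Write $a = |R\cap E^-(G,\sigma)|$ and $b = |R\cap E^+(G,\sigma)|$, so $a+b=|R|$ and by assumption $a>b$. Next I would apply the switching operation $\zeta$ on $R$ to obtain an equivalent signature $\sigma'$ defined by $\sigma'(e)=\zeta(e)\sigma(e)$; this flips the sign of every edge of $R$ and leaves edges outside $R$ unchanged. Then I would simply count negative edges under the new signature: the previously $a$ negative edges in $R$ become positive while the previously $b$ positive edges in $R$ become negative, so
\[
|E^-(G,\sigma')| = |E^-(G,\sigma)| - a + b < |E^-(G,\sigma)| = \epsilon(G,\sigma).
\]
Since $\sigma'$ is equivalent to $\sigma$, this contradicts the definition of $\epsilon(G,\sigma)$ as the minimum number of negative edges over all equivalent signatures.

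I do not foresee a real obstacle here: the only conceptual point is recognizing that switching on an edge cut is exactly the operation that trades $R\cap E^-$ for $R\cap E^+$ among the negative edges, so that the inequality $|E^-(G,\sigma')|<|E^-(G,\sigma)|$ is an immediate consequence of $a>b$. The argument is a one-line minimality argument and should fit naturally as a short observation, parallel to Observation~\ref{ob:equ}.
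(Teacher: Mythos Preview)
Your argument is correct and is exactly the standard minimality argument one expects here: switching on an edge cut $R$ swaps $R\cap E^-$ with $R\cap E^+$ among the negative edges, so if $|R\cap E^-|>|R\cap E^+|$ the new equivalent signature has strictly fewer negative edges, contradicting $|E^-(G,\sigma)|=\epsilon(G,\sigma)$. The paper itself does not supply a proof of this observation at all---it is simply quoted from \cite{WY}---so there is nothing further to compare; your one-line justification is precisely what underlies the cited fact.
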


By Observation~\ref{ob:cut}, $G^+$ is connected if $(G,\sigma)$ is connected and has minimum number of negative edges over all equivalent signatures, i.e., $|E^-(G,\sigma)|=\epsilon (G,\sigma)$.  Before proceed to prove our main result, we have the following result for 2-edge-connected signed graphs with even negativeness.

\begin{theorem}
Let $(G,\sigma)$ be a 2-edge-connected  signed graph with even negativeness. Then $(G,\sigma)$ has a signed-circuit cover with length less than $\frac 8 3 |E(G)|.$
\end{theorem}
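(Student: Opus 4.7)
The plan is to combine a signed-circuit cover obtained from Lemma~\ref{lem:S-cover} with $S=E^-(G,\sigma)$ together with a short cycle cover of the bridgeless graph $\widehat{G^+}$ produced by Theorem~\ref{thm:5/3}. By Observation~\ref{ob:equ} I may assume that $|E^-(G,\sigma)|=\epsilon(G,\sigma)$, which is even by hypothesis; Observation~\ref{ob:cut} then forces $G^+$ to be connected, since otherwise some edge cut of $G$ would be entirely negative. The balanced case $E^-(G,\sigma)=\emptyset$ follows immediately from Theorem~\ref{thm:5/3}, so I may assume $|E^-(G,\sigma)|\ge 2$.

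Setting $S=E^-(G,\sigma)$, I apply Lemma~\ref{lem:S-cover} to obtain a family $\mathcal F_1$ of signed-circuits covering $S$ and every $2$-edge-cut of $G$ that contains an edge of $S$, with
\[
\ell(\mathcal F_1)\leq \frac{1}{2}\bigl(3|E(G)|-|E(\widehat{G^+})|-\tau(G,\sigma)\bigr).
\]
Independently, I apply Theorem~\ref{thm:5/3} to the bridgeless graph $\widehat{G^+}$ to produce a cycle cover $\mathcal F_2$ of $\widehat{G^+}$ with $\ell(\mathcal F_2)\leq \frac{5}{3}|E(\widehat{G^+})|$; every cycle of $\mathcal F_2$ lies in $G^+$ and is therefore a positive signed-circuit of $(G,\sigma)$.

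The crux of the argument is to check that $\mathcal F_1\cup\mathcal F_2$ already covers every edge of $G$. By construction $\mathcal F_2$ covers $E(\widehat{G^+})$ and $\mathcal F_1$ covers $E^-(G,\sigma)$, so the only potentially uncovered edges lie in $E(G^+)\setminus E(\widehat{G^+})$, i.e., they are bridges of $G^+$. For such a bridge $e$, let $A$ and $B$ be the two components of $G^+-e$; the $G$-edge-cut between $A$ and $B$ has the form $\{e\}\cup X$ with $X\subseteq E^-(G,\sigma)$. The $2$-edge-connectedness of $G$ forces $|X|\ge 1$, while Observation~\ref{ob:cut} applied to this cut gives $|X|\le \frac{1+|X|}{2}$, so $|X|=1$. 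Hence $e$ lies in a $2$-edge-cut of $G$ meeting $S$, so $e$ is covered by $\mathcal F_1$. This step, which crucially uses the minimality $|E^-(G,\sigma)|=\epsilon(G,\sigma)$, is the main obstacle.

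Adding the two length bounds and using $|E(\widehat{G^+})|\leq |E(G)|$ yields
\[
\ell(\mathcal F_1)+\ell(\mathcal F_2)\leq \frac{3|E(G)|}{2}+\frac{7|E(\widehat{G^+})|}{6}-\frac{\tau(G,\sigma)}{2}\leq \frac{8|E(G)|}{3}-\frac{\tau(G,\sigma)}{2},
\]
and since $\tau(G,\sigma)>0$ by definition, the total length is strictly less than $\frac{8}{3}|E(G)|$, as required.
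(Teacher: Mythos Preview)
Your proof is correct and follows essentially the same approach as the paper: apply Lemma~\ref{lem:S-cover} with $S=E^-(G,\sigma)$, combine with the $\tfrac{5}{3}$-cover of $\widehat{G^+}$ from Theorem~\ref{thm:5/3}, and add the two bounds. The only cosmetic differences are that you spell out explicitly why the bridges of $G^+$ are covered by $\mathcal F_1$ (the paper asserts this without detail here, though the same argument appears in the proof of Theorem~\ref{thm:cover-bridge}), and you obtain the strict inequality via $\tau(G,\sigma)>0$ rather than via $|E(\widehat{G^+})|<|E(G)|$ as the paper does.
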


\begin{proof} By Observation~\ref{ob:equ},  without loss of generality, we may assume that $(G,\sigma)$ has minimum number of negative edges, i.e., $|E^-(G,\sigma)|=\epsilon(G,\sigma) \equiv 0\pmod 2$. If $\epsilon(G,\sigma)=0$, the result follows from Theorem~\ref{thm:5/3}. So, in the following, assume that $|E^-(G,\sigma)|=\epsilon(G,\sigma)\ge 2$.

By Observation~\ref{ob:cut}, $G^+$ is connected. 
By Lemma~\ref{lem:S-cover}, $(G,\sigma)$ has a family of signed-circuits $\mathcal F_1$ covering all edges in $E(G)\backslash E(\widehat{G^+})$ with length
\[\ell(\mathcal F_1)\le \frac 1 2 (3|E(G)|-|E(\widehat{G^+})|-\tau(G,\sigma)) \le \frac 1 2 (3|E(G)|-|E(\widehat{G^+})|).\]
By Theorem~\ref{thm:5/3}, $\widehat{G^+}$ has a circuit cover $\mathcal F_2$ with length $\ell(\mathcal F_2)\le \frac 5 3 |E(\widehat{G^+})|$. Note that $\mathcal F=\mathcal F_1\cup \mathcal F_2$ is a signed-circuit cover of $(G,\sigma)$. 
Since $|E(\widehat{G^+})|\leq |E(G)|-|E^-(G,\sigma)|$, it follows that
\[\begin{aligned}
\ell(\mathcal F) & \leq \ell(\mathcal F_1)+\ell(\mathcal F_2)   \leq\frac 1 2 (3|E(G)|-|E(\widehat{G^+})|)+\frac 5 3 |E(\widehat{G^+})|\\
                        &=\frac 3 2 |E(G)| +\frac 7 6 |E(\widehat{G^+})|< \frac 3 2 |E (G)| +\frac 7 6 |E(G)| \\
                        &=\frac 8 3 |E(G)|.
\end{aligned}
\] This completes the proof. 
\end{proof}

%\begin{theorem}
%Let $(G,\sigma)$ be a 2-edge-connected  signed graph with even negativeness.
%Then there is a family of circuits $\mathcal F$ of $(G,\sigma)$
%covering $E(G)\backslash E(\widehat{G^+})$ such that
%$$\ell(\mathcal F)\leq \frac 1 2 (3|E(G)|-|E(\widehat{G^+})|)$$
%and every negative loop is contained in one barbell of $\mathcal F$.
%\end{theorem}
%\begin{proof}

%Choose $S=E^-(G,\sigma)$. Then $E^-(G,\sigma)\cup T(G,\sigma)=E(G)\backslash E(\widehat{G^+})$ and
%by Lemma~\ref{lem:S-cover}, there exists a circuit cover $\mathcal F$ covering
%$E(G)\backslash E(\widehat{G^+})$ such that
%$$\ell(\mathcal F)\leq\frac 1 2 (3|E(G)|-|E(\widehat{G^+})|-\tau(G,\sigma))\leq \frac 1 2 (3|E(G)|-|E(\widehat{G^+})|)$$
%and every negative loop is contained in one barbell of $\mathcal F$.

%\end{proof}

In the following, we are going to prove our main result, Theorem~\ref{thm:main}. First, we need a variation of Lemma~\ref{lem:S-cover} as follows.

\begin{lemma}\label{lem:t-members-2}
Let $(G,\sigma)$ be a 2-edge-connected flow-admissible signed graph with a negative loop $e$ such that $G^+$ is connected. Then, for %any negative loop $e$ and
any integer $t\in \{1,2\}$,
there is a family of circuits $\mathcal F_t$ of $(G,\sigma)$ such that:\\
(i) $\mathcal F_t$ covers all edges of $E(G)\backslash E(\widehat{G^+})$;\\
(ii) the length of $\mathcal F_t$ satisfies 
\[
\ell(\mathcal F)\leq2|E(G)|-\frac 12 |E(\widehat{G^+})|;
\] 
(iii) $e$ is contained in exactly $t$ barbells of $\mathcal F$.
\end{lemma}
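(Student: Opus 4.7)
The proof will follow the template of Lemma~\ref{lem:S-cover}, with extra bookkeeping to place the negative loop $e$ in exactly $t$ barbells.

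For $t=1$, I would apply Lemma~\ref{lem:S-cover} with $S$ chosen to be an even-sized subset of $E^-(G,\sigma)$ that contains $e$ together with enough of the other negative edges so that the resulting family $\mathcal F$ covers all of $E(G)\setminus E(\widehat{G^+})$.  The natural choice is $S=E^-(G,\sigma)$, with a parity fix by dropping one non-loop negative edge if $|E^-(G,\sigma)|$ is odd; flow-admissibility together with the presence of $e$ ensures $|E^-(G,\sigma)|\ge 2$, so such an edge exists.  To guarantee that every bridge of $G^+$ is covered I would strengthen the minimum $T$-join step inside $G^+$ used in the proof of Lemma~\ref{lem:S-cover} by augmenting the target set with the endpoints of every bridge of $G^+$, so that each such bridge is automatically in the resulting $T$-join $J$ and hence in $H=G[S]\cup J$.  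Property (iii) then comes for free from Lemma~\ref{lem:S-cover}(iii), and the length estimate of Lemma~\ref{lem:S-cover}(ii) gives $\ell(\mathcal F)\le \frac 12(3|E(G)|-|E(\widehat{G^+})|-\tau(G,\sigma))\le 2|E(G)|-\frac 12 |E(\widehat{G^+})|$.

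For $t=2$, I would take the family from the $t=1$ construction and append one extra barbell $B^*$ whose two negative cycles are $e$ itself and a short negative cycle $C'$ of $(G,\sigma)$ joined by a minimal path $P$ from $v_e$ to a vertex of $C'$.  Choosing $C'$ to realise $\tau(G,\sigma)$ (so that $|E(C')|=\tau(G,\sigma)-1$) and bounding $|E(P)|$ by $\tfrac12|E(G)|$ via Corollary~\ref{cor:T-join} on an appropriate 2-edge-connected subgraph permitting the path, one hopes to force $|E(B^*)|$ inside the slack $\tfrac12|E(G)|+\tfrac12\tau(G,\sigma)$ between the $t=1$ bound and the target bound $2|E(G)|-\tfrac12|E(\widehat{G^+})|$.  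This would place $e$ in a second barbell without violating (i) or (ii), yielding (iii) with $t=2$.

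The hard part will be case $t=2$: a naive extra barbell has length $1+|E(C')|+|E(P)|$, which can overshoot the permitted slack $\tfrac12|E(G)|+\tfrac12\tau(G,\sigma)$ if $C'$ and $P$ are chosen without care.  I expect the actual proof to either re-use parts of the path system already present in $\mathcal F$, so that the ``extra'' edges are absorbed into the slack rather than added from scratch, or to carry out the $T$-join estimate in a contracted auxiliary graph obtained from $G$ by collapsing $\widehat{G^+}$ (or the cycles used in the $t=1$ pairing) to exploit the definition of $\tau(G,\sigma)$ more tightly.  The remaining verification---that the augmented family still covers $E(G)\setminus E(\widehat{G^+})$ and that the loop $e$ ends up in exactly two, not three or more, barbells---is routine once the length estimate is under control.
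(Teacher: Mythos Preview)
Your overall architecture---apply Lemma~\ref{lem:S-cover} to an even $S\subseteq E^-(G,\sigma)$ and append one extra barbell through $e$ when needed---matches the paper, but two of your steps fail. First, in the odd-parity case for $t=1$, dropping a non-loop negative edge $e'$ while keeping $e\in S$ leaves $e'$ (and any 2-edge-cut through it) uncovered, so (i) breaks. The paper's fix is to choose the dropped edge so that it lies in the appended barbell: for $t=1$ it drops $e$ itself, taking $S_1=E^-(G,\sigma)\setminus\{e\}$, and then appends a barbell $B_e$ built from $e$ and a shortest negative cycle $C_e$ of $(G-e,\sigma)$; for $t=2$ it drops a negative edge $e'\in E(C_e)$, so that $B_e$ again covers the dropped edge while $e$ now sits in two barbells. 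Your proposed ``strengthening'' of the $T$-join (adding bridge endpoints to $T$) is neither in the paper nor sound: enlarging $T$ does not force a prescribed bridge into a minimum $T$-join, and it would invalidate the size bound from Theorem~\ref{thm:T-join}; the paper simply invokes Lemma~\ref{lem:S-cover}(i) as stated.

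Second, your estimate for the extra barbell overshoots the slack by $\tfrac12\tau(G,\sigma)$, as you noticed. The missing observation is elementary: since $G$ is 2-edge-connected there are two edge-disjoint minimal paths $P_1,P_2$ from the vertex of $e$ to $C_e$, and $P_1,P_2,\{e\},C_e$ are pairwise edge-disjoint, so $|E(P_1)|+|E(P_2)|+|E(\widehat{B_e})|\le|E(G)|$. Averaging gives
\[
|E(B_e)|\le|E(\widehat{B_e})|+\tfrac12\bigl(|E(P_1)|+|E(P_2)|\bigr)\le\tfrac12\bigl(|E(G)|+\tau(G,\sigma)\bigr),
\]
which is exactly the available slack; no contraction of $\widehat{G^+}$ or re-use of earlier paths is needed. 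The paper also verifies $|E(\widehat{B_e})|=\tau(G,\sigma)$ (using that $e$ is a loop and $C_e$ is shortest), which is what makes your identification $|E(C')|=\tau(G,\sigma)-1$ legitimate.
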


\begin{proof} Since $(G,\sigma)$ is flow-admissible and $e$ is a negative loop, $(G-e,\sigma)$ has a negative edge $e'$ and hence has a negative cycle (for example, a cycle in $G^+\cup e'$ containing $e'$). Let $C_e$ be a negative cycle of $(G-e,\sigma)$ with minimum number of edges and let $B_e$ be a barbell consisting of $e$ and $C_e$ with minimum length.
 \medskip

\noindent{\bf Claim.} $|E(B_e)|\leq\frac 1 2 (|E(G)|+\tau(G,\sigma))$. \medskip

\noindent {\em Proof of the claim.}  Let $B$ be a barbell of $(G,\sigma)$ such that $|E(\widehat{\,B\,})|=\tau(G,\sigma)$.
Suppose $C_1$ and $C_2$ are two negative cycles contained in $B$.

The minimality $C_e$ implies that
$|E(C_e)|\leq \max\{ |E(C_1)|, |E(C_2)|\}$. Note that $1\le \min\{ |E(C_1)|, |E(C_2)|\}$. Therefore,
$$\tau(G,\sigma)\le |E(\widehat{B_e})|=|E(C_e)|+1\leq |E(C_1)|+|E(C_2)|=\tau(G,\sigma).$$
Hence, $|E(\widehat{B_e})|=\tau(G,\sigma)$. 
Since $(G,\sigma)$ is 2-edge-connected, there exist two edge-disjoint minimal paths $P_1$ and $P_2$, joining $e$ and $C_e$. So
\[\begin{aligned}
|E(B_e)| & \leq\frac 1 2 (|E(P_1\cup \widehat {B_e})|+|E(P_2\cup \widehat{B_e})|) \\
              & =\frac 1 2 \big ((|E(P_1)|+|E(P_2)|+|\widehat{B_e}|)+|\widehat{B_e}|\big ) \\
&\leq\frac 1 2 (|E(G)|+|E(\widehat{B_e})|)=\frac 1 2 (|E(G)|+\tau(G,\sigma)).
\end{aligned}
\]
This completes the proof of the claim. \medskip

If $|E^-(G,\sigma)|$ is even, by Lemma~\ref{lem:S-cover}, $(G,\sigma)$ has a family of signed-circuits
$\mathcal F_1$ which satisfies (i) covering $E^-(G,\sigma)$ and all 2-edge-cuts containing a negative edge, and hence covering all edges of $E(G)\backslash E(\widehat{G^+})$; (ii) having length
\[
\ell(\mathcal F_1)\leq \frac 1 2 (3|E(G)|-|E(\widehat{G^+})|-\tau(G,\sigma)) \le 2|E(G)|-\frac 1 2 |E(\widehat{G^+})|;
\]
and (iii) $e$ is contained in exactly one barbell of $\mathcal F_1$. So Lemma 3.4 follows if $t=1$. Now, assume that $t=2$. By the claim, let $\mathcal F_2=\mathcal F_1\cup \{B_e\}$ which is a family of signed-circuits satisfying (i) and having length 
\[\begin{aligned}
\ell(\mathcal F_2)& \le \ell(\mathcal F_1)+|E(B_e)| \\
                           & \leq\frac 1 2 \big (3|E(G)|-|E(\widehat{G^+})|-\tau(G,\sigma)\big )+\frac 1 2 (|E(G)|+\tau(G,\sigma))\\
                           &=2|E(G)|-\frac 1 2 |E(\widehat{G^+})|.
                           \end{aligned}\]
Then $\mathcal F_2$ is a family of signed-circuits of the type we seek. 

\vskip 3mm

In the following, assume that $|E^-(G,\sigma)|$ is odd.  Let $S_1=E^-(G,\sigma)\backslash e$ and let $S_2=E^-(G,\sigma)\backslash e'$ where $e'$ is a negative edge in $C_e$.
For each $S_t$ with $t\in \{1,2\}$, 
by Lemma~\ref{lem:S-cover},  $(G,\sigma)$ has a family of signed-circuits, denoted by $\mathcal F_{S_t}$,
which covers $S_t$ and all 2-edge-cuts containing an edge in $S_t$ and has length 
\[
\ell(\mathcal F_{S_t})\leq \frac 1 2 (3|E(G)|-|E(\widehat {G^+})|-\tau(G,\sigma)).
\]
Particularly, the loop $e$ is not covered by $\mathcal F_{S_1}$ but is contained in exactly one barbell of $\mathcal F_{S_2}$.

Let $\mathcal F_t=\mathcal F_{S_t}\cup\{B_e\}$ for $t\in \{1, 2\}$. Then $\mathcal F_t$ with $t\in \{1,2\}$ covers $E^-(G,\sigma)$ and all 2-edge-cuts containing an negative edge (a 2-edge-cut containing $e'$ is covered by $B_e$). Hence $\mathcal F_t$ covers all edges of $E(G)\backslash E(\widehat{G^+})$. By the claim, the length of $\mathcal F_t$ with $t\in \{1,2\}$ satisfies 
\[\begin{aligned}
\ell(\mathcal F_t)& =\ell(\mathcal F_{S_t})+|E(B_e)| \\
                        &\leq\frac 1 2\big (3|E(G)|-|E(\widehat{G^+})|-\tau(G,\sigma)\big)+\frac 1 2 (|E(G)|+\tau(G,\sigma)) \\
                        &=2|E(G)|-\frac 1 2 |E(\widehat{G^+})|.
\end{aligned}
\] 
Note that $e$ is contained in exactly $t$ barbells of $\mathcal F_t$ for $t\in \{1,2\}$. This completes the proof. 
\end{proof}

\begin{lemma}\label{lem:barbell}
Let $(G,\sigma)$ be a 2-edge-connected 
%flow-admissible 
signed graph such that $|E^-(G,\sigma)|\ge 2$ and $G^+$ is connected.
Then $(G,\sigma)$ has a  signed-circuit $D$ such that $D$ contains a negative edge in its cycle and
\[|E(D)|\leq \frac 1 2 (\tau(G,\sigma)+|E(G)|).\]
\end{lemma}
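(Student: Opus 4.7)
The plan is to produce $D$ from a carefully chosen barbell and exploit the 2-edge-connectedness of $G$ via an averaging argument. Fix a barbell $B$ of $(G,\sigma)$ achieving $|E(\widehat{B})|=\tau(G,\sigma)$, and write $\widehat{B}=C_1\cup C_2$ as the edge-disjoint union of its two negative cycles. If $B$ is a short barbell, then $C_1\cap C_2$ is a single vertex, $B=\widehat{B}$, and setting $D=B$ already gives $|E(D)|=\tau(G,\sigma)\le\tfrac{1}{2}(\tau(G,\sigma)+|E(G)|)$.

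The main case is when $B$ is a long barbell, so that $C_1$ and $C_2$ are vertex-disjoint. Here I would contract each $C_i$ in $G$ to a single vertex $u_i$; since contracting a connected subgraph preserves 2-edge-connectedness, the resulting multigraph $G/C_1/C_2$ is 2-edge-connected, so by Menger's theorem there are two edge-disjoint $u_1u_2$-paths. Lifting these back to $G$ produces two edge-disjoint $C_1$-$C_2$ paths $P_1,P_2$, each internally disjoint from $V(C_1)\cup V(C_2)$; taking each $P_i$ to be minimal, $B_i:=C_1\cup P_i\cup C_2$ is a long barbell with $|E(B_i)|=|E(P_i)|+\tau(G,\sigma)$. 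Setting $D$ to be the shorter of $B_1,B_2$ and averaging gives
\[
|E(D)|\le\tfrac{1}{2}\bigl(|E(P_1)|+|E(P_2)|\bigr)+\tau(G,\sigma).
\]
Because $P_1$, $P_2$, and $\widehat{B}$ are pairwise edge-disjoint subgraphs of $G$, we get $|E(P_1)|+|E(P_2)|+\tau(G,\sigma)\le|E(G)|$, so substitution yields the target bound $\tfrac{1}{2}(|E(G)|+\tau(G,\sigma))$. In either case $D$ is a barbell, so each of its two constituent negative cycles contains a negative edge.

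The main obstacle is securing two $C_1$-$C_2$ paths that are both edge-disjoint from each other \emph{and} from $E(\widehat{B})$, which is precisely what lets the additive inequality $|E(P_1)|+|E(P_2)|+\tau(G,\sigma)\le|E(G)|$ go through; this is the role of working inside $G/C_1/C_2$, where the edges of $C_1\cup C_2$ have been deleted before Menger is invoked. A minor remaining subtlety is the possibility that $C_i$ is a negative loop, in which case contraction just identifies its single endpoint with itself and the same argument applies. Finally, in the edge case $\mathcal B=\emptyset$ we have $\tau(G,\sigma)=|E(G)|$, and one may take $D$ to be any positive cycle containing two negative edges (such a cycle exists because $(G,\sigma)$ has at least two negative edges and any two edges of a 2-edge-connected graph lie on a common cycle), for which $|E(D)|\le|E(G)|=\tfrac{1}{2}(\tau(G,\sigma)+|E(G)|)$.
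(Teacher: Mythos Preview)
Your argument for the case $\mathcal B\neq\emptyset$ is correct and essentially the same as the paper's: choose a barbell realising $\tau(G,\sigma)$, and use two edge-disjoint minimal paths between its two negative cycles (obtained, as you do, by contracting the cycles and applying Menger) to bound the length of a shortest such barbell by averaging.

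The gap is in the edge case $\mathcal B=\emptyset$. You assert that a positive cycle through two negative edges exists because any two edges of a $2$-edge-connected graph lie on a common cycle, but that common cycle may well be negative. Concretely, take $G=K_4$ on $\{1,2,3,4\}$ with $E^-=\{12,13,23\}$ and $E^+=\{14,24,34\}$: then $G^+$ is the star at $4$, so it is spanning and connected; one checks that every pair of negative cycles shares an edge, so $\mathcal B=\emptyset$; yet the triangle $123$ is a cycle through the two negative edges $12$ and $13$ that is itself negative. Thus your justification does not produce a signed-circuit.

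What is missing is exactly the hypothesis you never used in this case, namely that $G^+$ is connected. The paper exploits this via Lemma~\ref{lem:shortest-cycle-edgesed}: for $S=\{e,e'\}\subseteq E^-(G,\sigma)$ it produces an even subgraph $H$ with $S\subseteq H\subseteq G^+\cup S$, so every cycle of $H$ has at most two negative edges. One then observes that the cycles of $H$ through $e$ and through $e'$ must coincide or share at least two vertices (otherwise they would yield a barbell), and hence $H$ contains a cycle through both $e$ and $e'$; being contained in $G^+\cup\{e,e'\}$, that cycle is automatically positive. You can repair your argument along these lines: instead of taking an arbitrary common cycle in $G$, route it through $G^+\cup\{e,e'\}$ using a $T$-join in $G^+$ for $T$ equal to the odd-degree vertices of $G[\{e,e'\}]$.
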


%\footnote{If $(G,\sigma)$ is 2-vertex-connected signed graph(i.e, sub-cubic signed graph),
%then $$|E(D)|\leq \frac{\tau(G,\sigma)+|V(G)|}{2}+1$$ and applying that, we have $$scc(G,\sigma)\leq\min\left
%\{\frac{8|E(G)|}{3}+\frac{|V(G)|}{2},\frac{23|E(G)|}{12}+\frac{3|V(G)|}{2}\right\}.$$
%In particular if $(G,\sigma)$ is a flow-admissible cubic signed graph, then $scc(G,\sigma)\leq \frac{35}{12}|E(G)|$.}

%\footnote{We may need to add $D$ contains a negative edge. See Theorem 4.3.}

\begin{proof}
First assume that $(G,\sigma)$ does not contain a barbell. Then $\tau(G,\sigma)=|E(G)|$. 
%The result holds trivially if $E^{-1}(G,\sigma)=\emptyset$. 
Let $e, e'\in E^-(G,\sigma)$ and $S=\{e, e'\}$. By Lemma~\ref{lem:shortest-cycle-edgesed}, there exists an even subgraph $H$ such that $S\subseteq E(H)\subseteq G^+\cup S$ and
\[ |E(H)|\leq |E(G)|-\frac 1 2 |E(\widehat{G^+})|.\]
Let $C_e$ and $C_{e'}$ be two cycle of $H$ containing $e$ and $e'$, respectively. Then either $C_e= C_{e'}$ or $|V(C_e\cap C_{e'})|\ge 2$. Otherwise, $(G,\sigma)$ has a barbell, which contradicts to the assumption. No matter $C_e= C_{e'}$ or $|V(C_e\cap C_{e'})|\ge 2$, $H$ has  a cycle containing both $e$ and $e'$, denoted by $D$. Since $D\subseteq H\subseteq G^+\cup S$, $D$ has exactly two negative edges and hence is a positive cycle (a signed-circuit) of $(G,\sigma)$. Furthermore,
\[|E(D)|\le |E(H)|\le  |E(G)|-\frac 1 2 |E(\widehat{G^+})|  \leq |E(G)|=\frac 1 2 (\tau(G,\sigma)+|E(G)|).\]

In the following, assume that $(G,\sigma)$ does have a barbell. If $(G,\sigma)$ itself is a short barbell, then let $D=(G,\sigma)$ and $\tau (G,\sigma)= |E(D)|$.  Hence $|E(D)|=|E(G)|=\frac 1 2 (\tau(G,\sigma) +|E(G)|)$ and the lemma holds trivially. Therefore, assume that $(G,\sigma)$ is not a barbell.  Then  $\tau(G,\sigma)<|E(G)|$.

Among all barbells $B$ of $(G,\sigma)$ with $|E(\widehat{\,B\,})|=\tau(G,\sigma)$, let $D$ be a such barbell with minimum number of edges. Then $D$ has a negative cycle which contains a negative edge. Let $P$ be the  path of $D$ joining the two negative cycles. By the minimality of $D$, the path $P$ has the shortest length among all minimal paths joining the two 
 cycles of $D$. Since $(G,\sigma)$ is 2-edge-connected, there exists two edge-disjoint minimal paths $P_1$ and $P_2$ joining the two negative cycles of $D$. Then $|E(D)|\le |E(\widehat{\,D\,})|+|E(P_i)|$ for $i\in \{1,2\}$.
By the minimality of $P$,
$$|E(D)|\le \frac 1 2 \Big ( 2 |E(\widehat{\,D\,})|+|E(P_1)|+|E(P_2)|\Big )
\leq \frac 1 2 \Big ( |E(\widehat {\,D\,})|+|E(G)|\Big )=\frac 1 2 (\tau(G,\sigma)+|E(G)|).$$
This completes the proof.
\end{proof}

%\noindent{\bf Remark.} The bound of the above lemma is sharp. For examples, $(G,\sigma)$ is a positive cycle or a signed graph consisting of two disjoint negative cycles joining by two edge-disjoint paths with common endvertices. \medskip

\begin{theorem}\label{thm:cover-bridge}
Let $(G,\sigma)$ be a flow-admissible signed graph. Then $(G,\sigma)$ has a family of circuits  $\mathcal F$ 
covering all edges in $E(G)\backslash E(\widehat{G^+})$ such that every negative loop is covered at most twice and 
its length satisfies
\[\ell(\mathcal F)\leq 2|E(G)|-\frac 1 2 |E(\widehat{G^+})|+b(G,\sigma),\]
where $b(G,\sigma)$ is the number of cut-edges of $(G,\sigma)$.
\end{theorem}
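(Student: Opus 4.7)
The plan is to prove Theorem~\ref{thm:cover-bridge} by induction on $b:=b(G,\sigma)$, the number of cut-edges. By Observation~\ref{ob:equ}, I may assume $|E^-(G,\sigma)|=\epsilon(G,\sigma)$; together with Observation~\ref{ob:cut}, this makes $G^+$ connected once $G$ is connected, so I may also assume $G$ is connected.

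For the base case $b=0$, the graph $G$ is $2$-edge-connected and the bound reduces to $\ell(\mathcal F)\le 2|E(G)|-\frac 1 2|E(\widehat{G^+})|$. If $G$ contains a negative loop, I invoke Lemma~\ref{lem:t-members-2} with $t=1$ directly. Otherwise I split on the parity of $|E^-(G,\sigma)|$: when $|E^-|$ is even, Lemma~\ref{lem:S-cover} with $S=E^-(G,\sigma)$ already yields length $\frac 1 2(3|E(G)|-|E(\widehat{G^+})|-\tau(G,\sigma))\le 2|E(G)|-\frac 1 2|E(\widehat{G^+})|$; when $|E^-|$ is odd, flow-admissibility forces $|E^-|\ge 3$, so I use Lemma~\ref{lem:barbell} to pick a short signed-circuit $D$ containing a negative edge $e_0$ with $|E(D)|\le \frac 1 2(|E(G)|+\tau(G,\sigma))$, apply Lemma~\ref{lem:S-cover} with $S=E^-(G,\sigma)\setminus\{e_0\}$ (now of even cardinality), and take $\mathcal F=\mathcal F_1\cup\{D\}$. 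Adding the two estimates, the $\tau(G,\sigma)$-contributions cancel and yield the target exactly. The ``at most one barbell per negative loop'' clauses inside Lemmas~\ref{lem:S-cover} and~\ref{lem:t-members-2} immediately give the negative-loop condition.

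For the inductive step $b\ge 1$, I would choose a cut-edge $e=uv$ such that one component $G_u$ of $G-e$ is itself $2$-edge-connected, i.e., $e$ is incident to a leaf of the bridge tree of $G$, and write $G-e=G_u\cup G_v$. Sign-bridgelessness forces both $G_u$ and $G_v$ to be unbalanced, so each carries a negative cycle $C_u$ and $C_v$; moreover $b(G_v)=b-1$. I apply the base case to $G_u$ and the inductive hypothesis to $G_v$, obtaining families $\mathcal F_u$ and $\mathcal F_v$, and merge them by replacing one barbell of $\mathcal F_u$ containing $C_u$ and one of $\mathcal F_v$ containing $C_v$ with a single long barbell $C_u\cup P\cup C_v$ whose connecting path $P$ passes through $e$. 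The accounting identities $|E(G_u)|+|E(G_v)|+1=|E(G)|$ and $|E(\widehat{G_u^+})|+|E(\widehat{G_v^+})|\le |E(\widehat{G^+})|$, together with the marginal cost $+1$ of inserting the bridge $e$ itself, produce the desired bound. The principal obstacle is this combining step: when one of $G_u,G_v$ is not flow-admissible on its own (the $\epsilon=1$ obstruction), one must either peel off a long barbell through $e$ plus a negative cycle from that side before applying induction, or reselect the cut-edge so that flow-admissibility is preserved. A second delicate point is to confirm that the barbell swap across $e$ respects the ``at most twice'' condition on negative loops, which is controlled by the loop-counting clauses in Lemmas~\ref{lem:S-cover} and~\ref{lem:t-members-2}.
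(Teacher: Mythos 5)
Your base case is essentially the paper's: the same even/odd split on $|E^-(G,\sigma)|$, with Lemma~\ref{lem:S-cover} alone in the even case and Lemma~\ref{lem:barbell} supplying a circuit $D$ through one negative edge $e_0$ so that $S=E^-(G,\sigma)\setminus\{e_0\}$ becomes even in the odd case; the $\tau(G,\sigma)$-terms cancel exactly as you say. The inductive step, however, has a genuine gap that you only half-name. Writing $G- uv=Q_1\cup Q_2$, each component is merely unbalanced, so it can have negativeness $1$ (e.g.\ $Q_2$ a single negative loop), and then neither the induction hypothesis nor the base case applies to it. Worse, even when both sides are flow-admissible, nothing in the statement being proved guarantees that $\mathcal F_u$ and $\mathcal F_v$ each contain a barbell with a negative cycle positioned so that it can be opened up and rerouted through $uv$ --- yet $uv$ lies in $E(G)\setminus E(\widehat{G^+})$ and the only signed-circuits containing a bridge are long barbells crossing it, so such a reroute is unavoidable. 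Your two proposed fixes (``peel off a barbell first'' or ``reselect the cut-edge'') are not developed, and the second cannot work in general since every cut-edge presents the same difficulty.

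The paper's device, which you are missing, is to attach an artificial negative loop $e_1$ at $u$ in $Q_1$ and $e_2$ at $v$ in $Q_2$. This makes both sides flow-admissible (the loop together with a negative cycle of the unbalanced component gives a barbell), gives $b(G_1,\sigma)=b(G,\sigma)-1$, and --- crucially --- the loop $e_1$ lies in $E(G_1)\setminus E(\widehat{G_1^{+}})$, so the inductive family must cover it, necessarily by barbells (a negative loop cannot lie in a positive cycle), and by the loop clause it lies in $t$ barbells with $t\in\{1,2\}$. Lemma~\ref{lem:t-members-2} is purpose-built to produce on the $Q_2$ side a family in which $e_2$ lies in \emph{exactly} $t$ barbells for whichever $t$ occurred, so the barbells can be paired across the cut, the two artificial loops deleted, and $uv$ spliced in, each splice saving one edge; the total saving $t\ge 1$ is what absorbs the surplus from $|E(G_1)|+|E(G_2)|=|E(G)|+1$ against the $-1$ in the bridge count. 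Without this multiplicity-matching your accounting does not close. (A minor slip besides: you need $|E(\widehat{G_1^{+}})|+|E(\widehat{G_2^{+}})|\ge |E(\widehat{G^+})|$, not $\le$; in fact equality holds because a bridgeless subgraph of $G^+$ cannot cross the bridge $uv$.)
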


\begin{proof} Note that, the theorem is ture if it holds for every connected component of $(G,\sigma)$. So, without loss of generality, assume that $(G,\sigma)$ is connected. 
By Observation~\ref{ob:equ}, we may assume that $(G,\sigma)$ has $|E^-(G,\sigma)|=\epsilon(G,\sigma)$. If $\epsilon(G,\sigma)=0$, then $E(G)\backslash E(G^+)=\emptyset$ and the result holds trivially. So assume that $\epsilon(G,\sigma)\ne 0$. Since $(G,\sigma)$ is flow-admissible, it follows that $|E^-(G,\sigma)|\ge 2$.  \medskip

We apply induction on $b(G,\sigma)$ to prove the theorem. First, we verify the base case $b(G,\sigma)=0$. In other words, $(G,\sigma)$ is 2-edge-connected. 

If $\epsilon(G,\sigma)$ is even,  by Lemma~\ref{lem:S-cover}, $(G,\sigma)$ has a family of signed-circuits $\mathcal F$ covering all 2-edge-cut containing a negative edge such that each negative loop is covered exactly once and
\[\ell(\mathcal F)\le \frac 1 2 (3|E(G)|-|E(\widehat{G^+})| -\tau(G,\sigma))\le 2|E(G)|-\frac 1 2 \tau(G,\sigma).\] Then $\mathcal F$ is a family of signed-circuits of the type desired. So assume that $\epsilon(G,\sigma)$ is odd and $\epsilon(G,\sigma)\ge 3$.
By Lemma~\ref{lem:barbell}, $(G,\sigma)$ has a signed-circuit $D$ such that $D$ has a cycle containing a negative edge and 
\[|E(D)|\leq \frac 1 2 (\tau(G,\sigma)+|E(G)|).\]
Let $e$ be a negative edge which is contained in a cycle of $D$, % such that $T(e)\cup\{e\}\subseteq D$. 
%\footnote{Requiring $D$ with a negative edge.}
and let $S=E^-(G,\sigma)\backslash e$. By Lemma~\ref{lem:S-cover}, $(G,\sigma)$ has a family of signed-circuits $\mathcal F'$ covering $S$ and all 2-edge-cuts containing an edge from $S$ such that  each negative loop of $S$ is covered exactly once and
\[\ell(\mathcal F')\leq \frac 1 2 \big(3|E(G)|-|E(\widehat{G^+})|-\tau(G,\sigma)\big).\]
Let $\mathcal F=\mathcal F'\cup\{D\}$. Since $S\cup \{e\}=E^-(G,\sigma)$,  it follows from Observation~\ref{ob:cut} that every positive edge in $E(G)\backslash E(\widehat{G^+})$ is contained in a 2-edge-cut which contains an edge from $E^-(G,\sigma)$. Therefore,
$\mathcal F$ is a family of circuits of $(G,\sigma)$ covering all edges of $E(G)\backslash E(\widehat{G^+})$ such that
\[\begin{aligned} 
\ell(\mathcal F) & =\ell(\mathcal F')+|E(D)| \\
                         &\leq\frac 1 2 \big (3|E(G)|-|E(\widehat{G^+})|-\tau(G,\sigma)\big )+\frac 1 2 (\tau(G,\sigma)+|E(G)|)\\
                         &= 2|E(G)|-\frac 1 2 |E(\widehat{G^+})|.
                         \end{aligned}\]
Note that every negative loop is covered by exactly one barbell of $\mathcal F'$, and $D$ is a signed-circuit. Hence $\mathcal F$ covers every negative loop at most twice. So the theorem holds if $b(G,\sigma)=0$.  \medskip

So, in the following, assume that $b(G,\sigma)\ne 0$ and the theorem holds for all flow-admissible signed graph with at most $b(G,\sigma)-1$ cut-edges.

Since $b(G,\sigma)\ne 0$, the graph $G$ has a cut-edge. Let $uv$ be a cut-edge of $G$ such that $G\backslash uv$ consists of two components $Q_1$ and $Q_2$,  one of which, say $Q_2$, contains no cut-edges. Without loss of generaility, assume that $u\in V(Q_1)$ and $v\in V(Q_2)$. Since  $(G,\sigma)$ is flow-admissible, both $Q_1$ and $Q_2$ are not balanced and hence contain a negative edge. Hence $Q_2$ is either a negative loop or 2-edge-connected. 

Let $(G_1,\sigma)$ be the resulting signed graph constructed from  $(Q_1,\sigma)$  by adding a negative loop $e_1$ attached to $u$. Then $(G_1,\sigma)$ is flow-admissible and $b(G_1,\sigma)=b(G,\sigma)-1<b(G,\sigma)$. 
By induction hypothesis, $(G_1,\sigma)$ has  a family of signed-circuits $\mathcal F_1$
covering all edges in $E(G_1)\backslash E(\widehat{G_1^+})$ such that
\[\ell(\mathcal F_1)\leq 2|E(G_1)|-\frac 1 2 |E(\widehat{G_1^+})|+b(G_1,\sigma)\]
and every negative loop is contained in at most two barbells of $\mathcal F_1$. Assume that $e_1$ is contained in $t$ barbells of $\mathcal F_1$ with $t\in \{1,2\}$. 

Let $(G_2,\sigma)$ be the resulting graph constructed from  $(Q_2,\sigma)$ by attaching a negative loop $e_2$ to $v$. Then $(G_2,\sigma)$ is flow-admissible and 2-edge-connected. 
%Recall that $(Q,\sigma)$ contains a negative edge. Hence $(G_2,\sigma)$ is flow-admissible and 2-edge-connected.
By Lemma~\ref{lem:t-members-2}, $(G_2,\sigma)$ has a family of signed-circuits $\mathcal F_2$
covering all edges in $E(G_2)\backslash E(\widehat{G_2^+})$ such that
$$\ell(\mathcal F_2)\leq 2|E(G_2)|-\frac 1 2 |E(\widehat{G_2^+})|$$
and $e_2$ is contained in exactly $t$ barbells of $\mathcal F_2$.

Let $B'_i$ with $1\le i\le t$ be the barbells of $\mathcal F_1$ containing $e_1$, and $B_i''$ with $1\le i\le t$ be the barbells of $\mathcal F_2$ containing $e_2$. (Note that, $t\le 2$.) Let $B_i=(B_i'\backslash e_1)\cup (B_i''\backslash e_2) \cup uv$ which is a barbell of $(G,\sigma)$ for $1\le i \le t$, and let $\mathcal F=(\mathcal F_1\backslash \{B_i'| 1\le i\le t\})\cup (\mathcal F_2\backslash \{B_i''| 1\le i\le t\}) \cup \{B_i| 1\le i\le t\}$. Since $|E(B_i)|=|E(B_i')|+|E(B_i'')|-1$ for each $i$ with $1\le i\le t$. Then
\begin{equation}\label{eq:3}
\ell(\mathcal F)=\ell(\mathcal F_1)+\ell(\mathcal F_2) -t \le \big(2|E(G_1)| -\frac 1 2 |E(\widehat{G_1^+})|+b(G_1,\sigma)\big) +\big(2|E(G_2)|-\frac 1 2 |E(\widehat{G_2^+})|\big )-t.
\end{equation}

%If $t=1$, let $B_1$ be the unique barbell in  $\mathcal F_1$ containing $e_1$ and  $B_2$ is the unique barbell of $\mathcal F_2$ containing $e_2$. Let $B=(B_1\backslash e_1)\cup (B_2\backslash e_2) \cup uv$ and let $\mathcal F=(\mathcal F_1\backslash B_1)\cup (\mathcal F_2\backslash B_2) \cup B$. 
%Then $|E(B)|=|E(B_1)|+|E(B_2)|-1$, and further
%\begin{equation}\label{ineq-1}
%\ell(\mathcal F)=\ell(\mathcal F_1)+\ell(\mathcal F_2)-1\leq
%2|E(G_1)|-\frac 1 2 |E(\widehat{G_1^+})|+|B(G_1,\sigma)|+2|E(G_2)|-\frac 1 2 |E(\widehat{G_2^+})| -1.
%\end{equation}

%If $t=2$, then let $B_1^1$ and $B_1^2$ be two barbells in $\mathcal F_1$ containing $e_1$ and let $B_2^1$ and $B_2^2$ be two barbells of $\mathcal F_2$ containing $e_2$. Let $B_i=(B_1^i\backslash e_1)\cup (B_2^i\backslash e_2)\cup uv$ for $i=1$ and 2. Then $|E(B_i)|=|E(B_1^i)|+|E(B_2^i)|-1$ for $i=1$ and 2. Further, 
%\begin{equation}\label{ineq-2}
%\ell(\mathcal F)=\ell(\mathcal F_1)+\ell(\mathcal F_2)-1\leq
%2|E(G_1)|-\frac 1 2 |E(\widehat{G_1^+})|+|B(G_1,\sigma)|+2|E(G_2)|-\frac 1 2 |E(\widehat{G_2^+})| -2.
%\end{equation}

Since $\widehat{G_1^+}\cup \widehat{G_2^+}=\widehat{G^+}$ and $\widehat{G_1^+}\cap \widehat {G_2^+}=\emptyset$, we have 
$|E(\widehat {G_1^+})|+|E(\widehat{G_2^+})|=|E(\widehat{G^+})|$. 
Note that $b(G_1,\sigma)=b(G,\sigma)-1$ and $|E(G_1)|+|E(G_2)|=|E(G)|+1$.
It follows from  (\ref{eq:3}) that 
\[\ell(\mathcal F)\leq 2(|E(G)|+1)-\frac 1 2 |E(\widehat{G^+})|+(b(G,\sigma)-1)-t\le  2|E(G)|-\frac 1 2 |E(\widehat{G^+})|+b(G,\sigma).\]
Note that a negative loop of $(G,\sigma)$ (if exists) is contained in one or two barbells of either $\mathcal F_1$ or $\mathcal F_2$ (but not both). Hence a negative loop of $(G,\sigma)$ is contained in either one or two barbells in $\mathcal F$. This completes the proof.
\end{proof}

Now, we are ready to prove our main result, Theorem~\ref{thm:main}. \medskip

\noindent{\bf Proof of Theorem~\ref{thm:main}.} Let $(G,\sigma)$ be a flow-admissible signed graph.  By observation~\ref{ob:equ}, we may assume that $|E^-(G,\sigma)|=\epsilon(G,\sigma)$. If $\epsilon(G,\sigma)=0$, then the results follows directly from Theorem~\ref{thm:5/3}. So assume that $\epsilon(G,\sigma)\ge 2$. By  Observation~\ref{ob:cut},  $G^+$ is connected.
% $E^-(G,\sigma)\cup T(G,\sigma)\cup B(G,\sigma)=E(G)\backslash E(\widehat{G^+})$.  Hence
%$|E(\widehat{G^+})|\leq |E(G)|-|B(G,\sigma)|-|E^-(G,\sigma)|$, i.e., $|E(\widehat{G^+})|+|B(G,\sigma)|+|E^-(G,\sigma)|\le |E(G)|$. 

By Theorem~\ref{thm:cover-bridge}, $(G,\sigma)$ has a family of signed-circuits $\mathcal F_1$ covering all edges in $E(G)\backslash E(\widehat{G^+})$ with length
\[\ell(\mathcal F_1)\le 2|E(G)|-\frac 1 2 |E(\widehat {G^+})| +b(G,\sigma),\]
where $b(G,\sigma)$ is the number of cut-edges of $(G,\sigma)$. 
By Theorem~\ref{thm:5/3}, the subgraph $\widehat{G^+}$ has a signed-circuit cover $\mathcal F_2$ with length $\ell(\mathcal F_2) \le \frac 5 3 |E(\widehat{G^+})|$. Therefore, $\mathcal F=\mathcal F_1\cup \mathcal F_2$ is a signed-circuit cover of $(G,\sigma)$,  and
\[\begin{aligned} 
 \ell (\mathcal F) &=\ell(\mathcal F_1)+\ell(\mathcal F_2)  \leq 2|E(G)|-\frac 1 2 |E(\widehat{G^+})|+b(G,\sigma) + \frac 5 3 |E(\widehat{G^+})|\\
                     & \leq 2|E(G)| +\frac 7 6 |E(\widehat{G^+})|+b(G,\sigma)\\
                     & < 2|E(G)| +\frac 7 6  |E(G)| =\frac{19} 6 |E(G)|.
\end{aligned}\]
This completes the proof of Theorem~\ref{thm:main}. \qed

\medskip
\noindent {\bf Remark.} Let  $(P_{10},\sigma)$ be the signed graph with $P_{10}$ being the Petersen graph and $E^-(P_{10}, \sigma)$ inducing a 5-cycle. 
M\'a\v{c}ajov\'a et. al. \cite{mrrs} show that a shortest circuit cover of $(P_{10}, \sigma)$ has length exactly $\frac 5 3 |E(P_{10})|$. The optimal upper bound for the shortest signed-circuit cover remains to be investigated. 
 
%%-----------------------------------------------------------------------

\end{document}